\theoremstyle{definition}
\newtheorem{defn}{\protect\definitionname}
\theoremstyle{plain}
\newtheorem{thm}{\protect\theoremname}
\theoremstyle{remark}
\newtheorem{rem}{\protect\remarkname}
\theoremstyle{plain}
\newtheorem{prop}{\protect\propositionname}
\theoremstyle{definition}
\newtheorem{example}{\protect\examplename}
\newcommand{\cM}{\mathcal{M}}
\newcommand{\R}{\mathds{R}}
\providecommand{\definitionname}{Definition}
\providecommand{\examplename}{Example}
\providecommand{\propositionname}{Proposition}
\providecommand{\remarkname}{Remark}
\providecommand{\theoremname}{Theorem}
\begin{document}

\title{Do Finite-Size Lyapunov Exponents \\
Detect Coherent Structures?}

\author{Daniel Karrasch\thanks{Email: karrasch@imes.mavt.ethz.ch} \and George Haller\thanks{Corresponding author email: georgehaller@ethz.ch}\\
\and
Institute of Mechanical Systems\\
ETH Zurich, Tannenstrasse 3\\
8092 Zurich, Switzerland}
\maketitle
\begin{abstract}
Ridges of the Finite-Size Lyapunov Exponent (FSLE) field have been
used as indicators of hyperbolic Lagrangian Coherent Structures (LCSs).
A rigorous mathematical link between the FSLE and LCSs, however, has
been missing. Here we prove that an FSLE ridge satisfying certain
conditions does signal a nearby ridge of some Finite-Time Lyapunov
Exponent (FTLE) field, which in turn indicates a hyperbolic LCS under
further conditions. Other FSLE ridges violating our conditions, however, are
seen to be false positives for LCSs. We also find further limitations
of the FSLE in Lagrangian coherence detection, including ill-posedness,
artificial jump-discontinuities, and sensitivity with respect to the
computational time step.
\end{abstract}

\maketitle
\textbf{
Originally developed as a diagnostic for multi-scale mixing,
the Finite-Size Lyapunov Exponent (FSLE) has also been broadly used
to detect coherent structures in dynamical systems. This use of the
FSLE is motivated by a heuristic analogy with the Finite-Time Lyapunov
Exponent (FTLE), a classic measure of particle separation. Here we
derive conditions under which this analogy is mathematically justified.
We also show by examples, however, that the FSLE field has several
shortcomings when applied to coherent structure detection.}

\section{Introduction}

The Finite-Size Lyapunov Exponent (FSLE) is a popular diagnostic of
trajectory separation in dynamical systems. To define this quantity,
one first selects an initial separation $\delta_{0}>0$ and a separation
factor $r>1$ of interest. The separation time $\tau(x_{0};\delta_{0},r)$
is then defined as the minimal time in which the distance between
a trajectory starting from $x_{0}$ and some neighboring trajectory
starting $\delta_{0}$-close to $x_{0}$ first reaches $r\delta_{0}$.
The FSLE $\sigma$ associated with the location $x_{0}$ is then defined
as (cf.\ \cite{Aurell1997,Artale1997,Joseph2002})
\begin{equation}
\sigma\left(x_{0};\delta_{0},r\right)\coloneqq\frac{\log r}{\tau\left(x_{0};\delta_{0},r\right)}.\label{eq:sigmadef-1}
\end{equation}

This quantity infers a local separation exponent for each initial
condition $x_{0}$ over a different time interval of length $\tau(x_{0};\delta_{0},r)$.
The FSLE field is therefore not linked directly to the flow map between
times $t_{0}$ and $t$ for any choice of $t$. In addition, the FSLE
field $\sigma\left(x_{0};\delta_{0},r\right)$ depends on the choice
of the initial separation and the separation factor.

These dependencies are generally viewed as advantages of the FSLE,
enabling the targeted detection of material stretching at different
spatial scales. The spatial average of the FSLE field is particularly
helpful in describing the statistics of trajectory separation under
finite-size perturbations \cite{Cencini2013}.

Beyond Lagrangian statistics, however, the FSLE has also been used
in the detection of specific coherent flow features. In particular,
ridges of the FSLE field have been proposed as indicators of hyperbolic
Lagrangian Coherent Structures (LCSs), which are most repelling or
most attracting material surfaces over a given time interval $[t_{0},t]$ \cite{Joseph2002,dOvidio2004,Bettencourt2013}. This idea is based
on a heuristic analogy with the Finite-Time Lyapunov Exponent (FTLE)
field and an observed visual similarity of the FSLE and FTLE fields, cf.\
\cite[Sec. 10.5.1]{Lai2011} and \cite{Peikert2014}.

Recent results guarantee that certain FTLE ridges do signal nearby
hyperbolic LCSs defined over the \emph{same time interval} $[t_{0},t]$
(cf.\ \cite{Haller2011,Haller2000-1,Farazmand2012,Karrasch2012}).
These results, however, do not extend to ridges of $\sigma\left(x_{0};\delta_{0},r\right)$
in any obvious way, because the latter ridges involve a range of time
scales. Furthermore, assessing rigorously the stability of material
surfaces requires an accurate characterization of the fate of infinitesimally
small perturbations to such surfaces. Using the FSLE in locating LCSs
accurately, therefore, is a diversion from its original mandate, the
description of finite-size perturbations to trajectories.

Here we discuss in detail some marked differences between the FSLE
and FTLE that contradict the broadly presumed equivalence of these
two scalar fields. The differences stem from irregularities of the
FSLE field, which include local ill-posedness, spurious ridges, insensitivity
to changes in the dynamics past the separation time, and intrinsic
jump-discontinuities. Families of such jump-discontinuity surfaces
turn out to be generically present in any nonlinear flow, creating
sensitivity in FSLE computations with respect to the temporal resolution
of the underlying flow data.

We also establish mathematical conditions under which select FSLE
ridges do signal the presence of nearby FTLE ridges, which in turn
mark hyperbolic LCSs under further conditions. The key tool used in
proving this result is a new separation metric, the Infinitesimal-Size
Lyapunov Exponent (ISLE), which we introduce here as the $\delta_{0}\to0$
limit of the FSLE. We also show examples in which FSLE ridges fail
to satisfy our conditions, and indeed do not correspond to nearby
FTLE ridges.

A side-result of our paper is a new ridge definition (cf.\ Definition 2)
that guarantees structural stability for the ridge of a scalar field under
small perturbations. Such a definition has apparently been unavailable in
the literature, and hence should be of independent interest.

\section{Notation and definitions\label{sec:Preliminaries}}

Consider an $n$-dimensional unsteady vector field $v(x,t)$, whose
trajectories are generated by the dynamical system
\begin{align}
\dot{x} & =v(x,t), & x & \in D\subset\mathbb{R}^{n}.\label{eq:velo}
\end{align}
We assume that $v(x,t)$ is of class $C^{3}$ in its arguments. The
trajectory of \eqref{eq:velo} starting from the point $x=x_{0}$
at time $t=t_{0}$ is denoted $x\left(t;t_{0},x_{0}\right)$, which
allows us to define the flow map as
\[
F_{t_{0}}^{t}(x_{0})\coloneqq x\left(t;t_{0},x_{0}\right).
\]
We will use the Cauchy--Green strain tensor $C{}_{t_{0}}^{t}(x_{0}):=\left[DF_{t_{0}}^{t}(x_{0})\right]^{T}DF_{t_{0}}^{t}(x_{0})$,
a symmetric positive definite tensor field associated with the flow
map. The maximal and minimal strain eigenvalues and the corresponding
strongest and weakest unit strain eigenvectors of $C{}_{t_{0}}^{t}(x_{0})$
satisfy the relations
\begin{align*}
C{}_{t_{0}}^{t}(x_{0})e_{\mathrm{min}}\left(C{}_{t_{0}}^{t}(x_{0})\right) & =\lambda_{\mathrm{min}}\left(C{}_{t_{0}}^{t}(x_{0})\right)e_{\mathrm{min}}\left(C{}_{t_{0}}^{t}(x_{0})\right),\\
C{}_{t_{0}}^{t}(x_{0})e_{\mathrm{max}}\left(C{}_{t_{0}}^{t}(x_{0})\right) & =\lambda_{\mathrm{max}}\left(C{}_{t_{0}}^{t}(x_{0})\right)e_{\mathrm{max}}\left(C{}_{t_{0}}^{t}(x_{0})\right).
\end{align*}
The Finite-Time Lyapunov Exponent (FTLE) over the time interval $[t_{0},t]$
is then defined as
\begin{equation}
\Lambda_{t_{0}}^{t}(x_{0})\coloneqq\frac{1}{2\left(t-t_{0}\right)}\log\lambda_{\max}\left(C{}_{t_{0}}^{t}(x_{0})\right).\label{eq:FTLE}
\end{equation}
The FTLE measures the largest average exponential separation rate
between the trajectory starting at $x_{0}$ and trajectories starting
infinitesimally close to $x_{0}$. The separation rates in this maximization
are compared over a common length of time $\left(t-t_{0}\right)$,
and hence describe the stretching properties of the flow map $F_{t_{0}}^{t}$
near $x_{0}$.

An alternative assessment of separation in the flow is provided by
the Finite-Size Lyapunov exponent (FSLE). As already noted in the
Introduction, for a fixed separation factor $r>1$, and initial
separation $\delta_{0}>0$, the separation time $\tau(x_{0};\delta_{0},r)$
is defined as
\begin{equation}
\tau(x_{0};\delta_{0},r)=\min_{\left\vert x_{1}-x_{0}\right\vert =\delta_{0}}\left\{ \left|t-t_{0}\right|:\,\, t>t_{0},\,\,\left\vert F_{t_{0}}^{t}(x_{1})-F_{t_{0}}^{t}(x_{0})\right\vert =r\delta_{0}\right\} ,\label{eq:taudef}
\end{equation}
from which the FSLE field $\sigma(x_{0};\delta_{0},r)$ is computed
as in \eqref{eq:sigmadef-1}.

The only setting in which FSLE and FTLE are directly related by a formula is
that of linear dynamical systems. Such systems exhibit spatially
homogeneous separation properties at all scales, and hence the quantities $\sigma$,
$\tau$ and $\Lambda_{t_{0}}^{t}$ are all independent of the initial
condition $x_{0}$ and the initial separation $\delta_{0}$. Therefore,
for linear systems we obtain
\[
\sigma(r)=\Lambda_{t_{0}}^{t_{0}+\tau(r)}
\]
for the common FSLE and FTLE values that all trajectories share.

For nonlinear systems, however, the separation time $\tau$ will also
depend on the initial condition $x_{0}$ and the initial separation
$\delta_{0}$. As a consequence, the FSLE and FTLE fields will no longer be computable from each other, no matter what integration time is used in the FTLE.

More generally, the philosophy behind computing FSLE is not in line
with classical, observation-driven assessments of flow properties
between fixed initial and final times. All basic concepts in dynamical
systems and Lagrangian continuum mechanics build on properties of
the flow map, and hence fall in the latter observational category.
Specifically, material surfaces that show locally extreme repulsion
or attraction over a fixed observational period (i.e., hyperbolic
LCSs) have no immediate connection with the FSLE field.

\section{Non-equivalence of the FSLE and FTLE fields \label{sec:FSLE-FTLE}}

Despite the above conceptual differences between the FTLE and FSLE,
they are often assumed to be operationally equivalent. Below we give
several reasons why such an equivalence cannot hold in general.

\subsection{Ill-posedness of FSLE}

While the FTLE is well-defined for any choice of its arguments (any
initial condition and integration time), the FSLE is not defined at
$x_{0}$ if the local separation around $x_{0}$ never reaches $r$-times
the initial separation over the duration of the available velocity
data. This affects more and more initial conditions as $r$ is increased.

Consider, for instance, the transient saddle flow
\begin{equation}
\begin{split}\dot{x}_{1} & =-x_{1}+s(t)(1+x_{1}),\\
\dot{x}_{2} & =x_{2}-s(t)x_{2},
\end{split}
\label{eq:ex1}
\end{equation}
with a smooth function $s(t)$ that is strictly monotone increasing
over the time interval $[a,b]$, with $s(t)=0$ for $t\leq a$ and
$s(t)=1$ for $t\geq b>a>0$. The flow \eqref{eq:ex1} represents
a smooth transition from a stagnation point flow to a parallel shear
flow over the time interval $[a,b]$.

Two trajectories of \eqref{eq:ex1} starting from $y_{0}=\left( y_{0}^1,y_{0}^2\right)$
and $x_{0}=\left(x_{0}^1,x_{0}^2\right)$ at time $t_{0}=0$ satisfy the estimate
\begin{align*}
\left|F_{0}^{t}(y_{0})-F_{0}^{t}(x_{0})\right| & \leq\left|y_{0}^1-x_{0}^1\right|+\left|y_{0}^2-x_{0}^2\right|\mathrm{e}^{\int_{0}^{t}(1-s(\tau))\,\mathrm{d}\tau}\\
 & \leq\left|y_{0}^1-x_{0}^1\right|+\left|y_{0}^2-x_{0}^2\right|\mathrm{e}^{b}\leq\left|y_{0}-x_{0}\right|\mathrm{e}^{b}.
\end{align*}
Therefore, for any choice of the initial separation $\delta_{0}=\left|y_{0}-x_{0}\right|$,
the final separation of the trajectories will never be larger than
$\delta_{0}\mathrm{e}^{b}$, and hence the FSLE field $\sigma\left(x_{0};\delta_{0},r\right)$
is undefined for any $r>\mathrm{e}^{b}$. At the same time, the FTLE
field $\Lambda_{0}^{t}(x_{0})$ is well-defined for any choice of
$t$ and $x_{0}$.

In exploring an \emph{a priori} unknown flow, the identification of
the maximal meaningful $r$ value for which the FSLE is well-defined
can be a costly numerical process.

\subsection{Insensitivity of FSLE to later changes in the flow}

Past the separation time $\tau(x_{0};\delta_{0},r)$, the FSLE will
become insensitive to any further changes in stretching rates along
a trajectory $F_{t_{0}}^{t}\left(x_{0}\right)$. By contrast, the
FTLE, when computed over increasing integration times,
keeps monitoring the same trajectory beyond the time $\tau(x_{0};\delta_{0},r)$,
continually revising the averaged largest exponential stretching rate
along $F_{t_{0}}^{t}\left(x_{0}\right)$.

For instance, consider the incompressible flow
\begin{equation}
\begin{split}\dot{x}_{1} & =-x_{1}-s(t)\frac{x_{1}}{\cosh(x_{2})^{2}},\\
\dot{x}_{2} & =x_{2}+s(t)\tanh x_{2},
\end{split}
\label{eq:nonlinear saddle}
\end{equation}
with the smooth function $s(t)$ again defined as in \eqref{eq:ex1}.
This flow turns from a linear saddle into a nonlinear saddle gradually
over the $[a,b]$ time interval. Therefore, computing the FSLE field
$\sigma\left(x_{0};\delta_{0},r\right)$ from $t_{0}=0$ with any
$r\leq \mathrm{e}^{a}$ gives
\[
\sigma\left(x_{0};\delta_{0},r\right)=1,
\]
for any choice of $x_{0}$ and $\delta_{0}$. Therefore, irrespective
of the later transition of the flow from linear to nonlinear, a computation
of the FSLE field will return $\sigma\equiv1$ for a range of $r$
values. This range grows exponentially with the magnitude of $a$.
Again, in case of an\emph{ a priori} unknown flow, one is unaware
of flow structures and their temporal changes, and would precisely
like to use the FSLE to obtain information about these unknown factors.
Exploring all possible choices of the separation factor $r$ to obtain
this information is clearly a tedious procedure.

By contrast, over a fixed time interval of observation $[0,t]$, the
FTLE field $\Lambda_{t_{0}}^{t}(x_{0})$ will correctly assess the
uniform linear separation rate for times up to $t=a$, then starts
reflecting the developing inhomogeneity in separation by highlighting
the stable manifold of the nonlinear saddle as a ridge for times $t>a$
(see Fig.\ \ref{fig:simple}).

\begin{figure}
\centering
\includegraphics[width=0.3\textwidth]{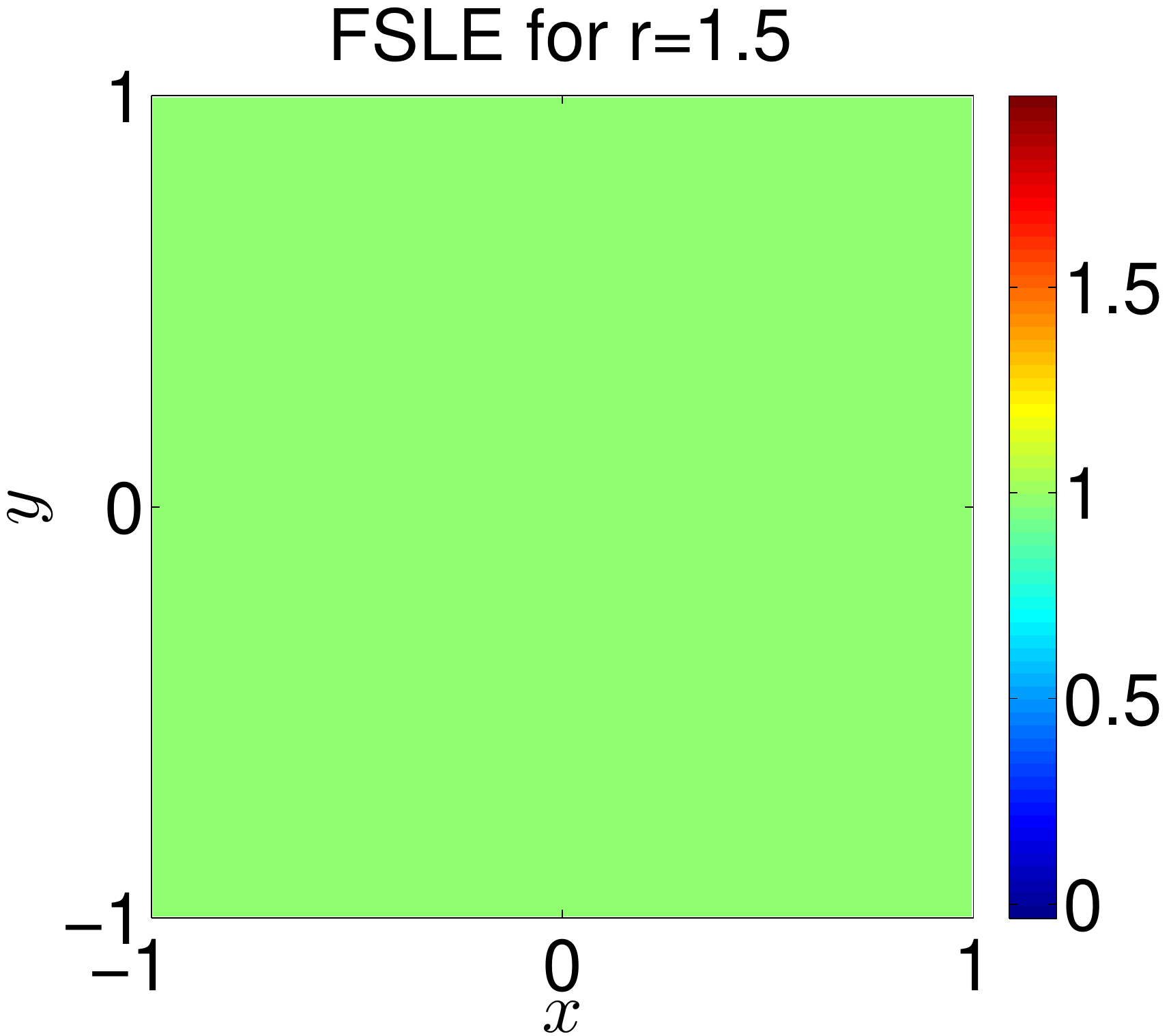}\qquad\includegraphics[width=0.3\textwidth]{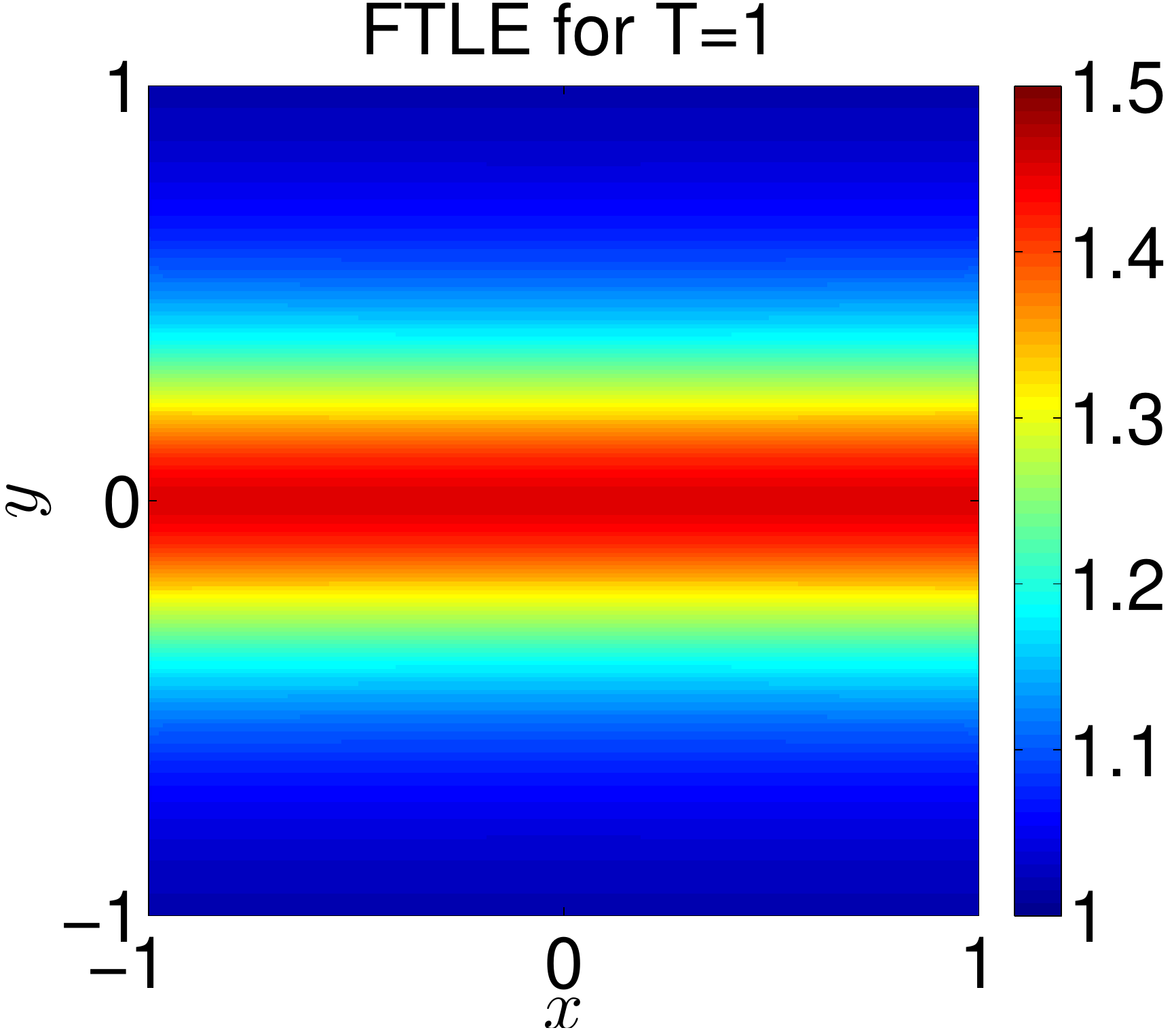}
\caption{FSLE and FTLE fields for the flow \eqref{eq:nonlinear saddle}, with
initial time $t_{0}=0$ and transition between $a=0.5$ and $b=0.6$.
Left: FSLE field for $r=1.5$. Right: FTLE field for $T=1$.}
\label{fig:simple}
\end{figure}

\subsection{Inherent jump-discontinuities of FSLE \label{sec:non-smoothness}}

While the FTLE is defined through the explicit formula \eqref{eq:FTLE},
the FSLE field \eqref{eq:sigmadef-1} relies on the separation time
$\tau$ defined implicitly by Eq.\ \eqref{eq:taudef}. Solutions
of such implicit equations generally admit discontinuities, and the
FSLE field is no exception to this rule.

To illustrate this, we consider the system
\begin{equation}
\begin{split}\dot{x}_{1} & =-0.1\pi\sin\pi x_{1}\cos\pi x_{2},\\
\dot{x}_{2} & =0.1\pi\cos\pi x_{1}\sin\pi x_{2},
\end{split}
\label{eq:gyre-1}
\end{equation}
a specific steady version of the double-gyre flow introduced by \cite{Shadden2005}.

The left plot in Fig.\ \ref{fig:single_gyre} shows the FSLE field
computed over one of the two gyres in the flow with separation factor
$r=6$. Jump-discontinuities along a large family of curves are readily
observed. These discontinuities become even more apparent in the middle
plot, where we graph FSLE values along the line $x_{\textnormal{2,spec}}=0.48$
and $x_{1}\in\left[0,0.5\right]$.

\begin{figure}
\centering
\includegraphics[width=0.35\textwidth]{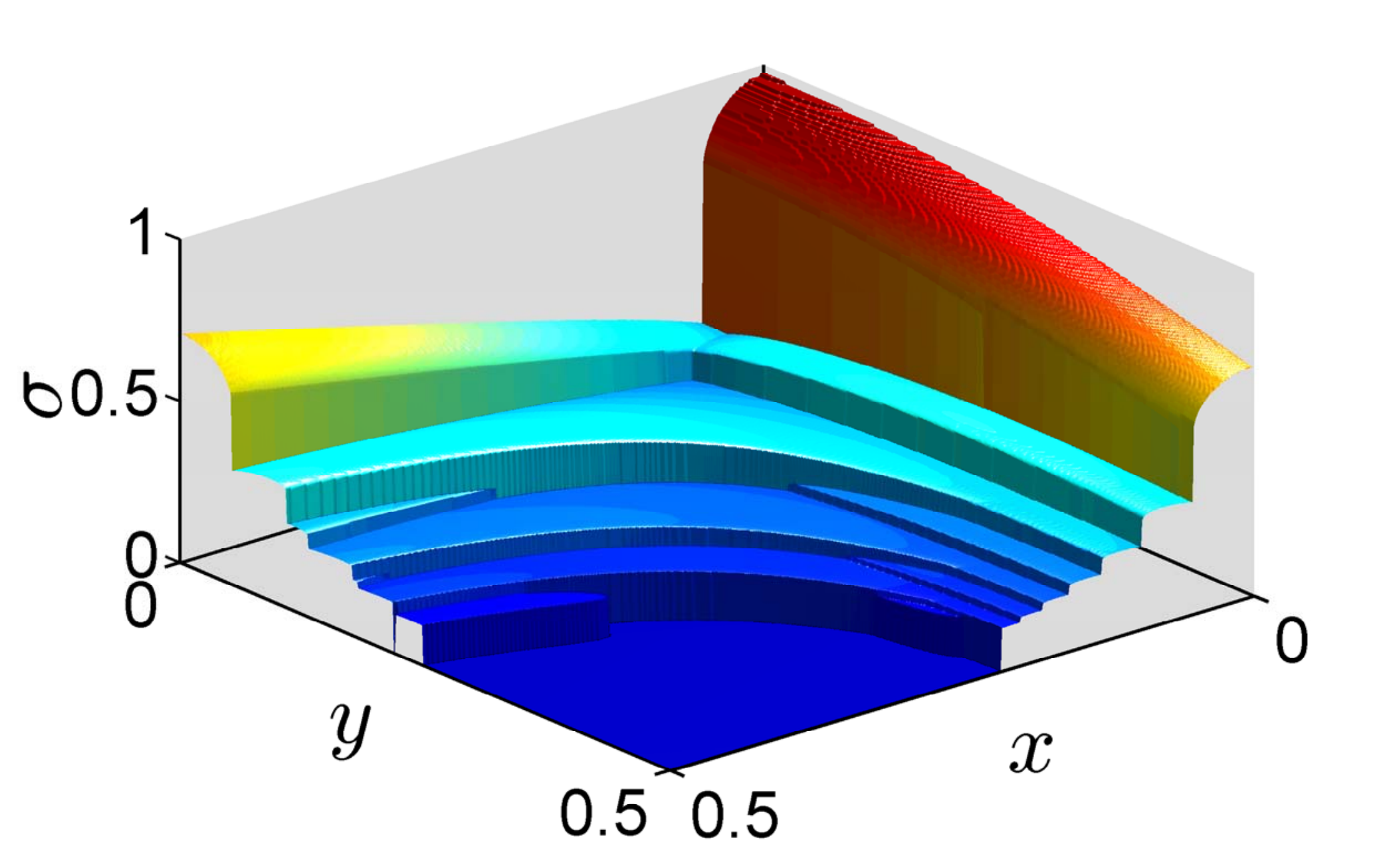}\includegraphics[width=0.3\textwidth]{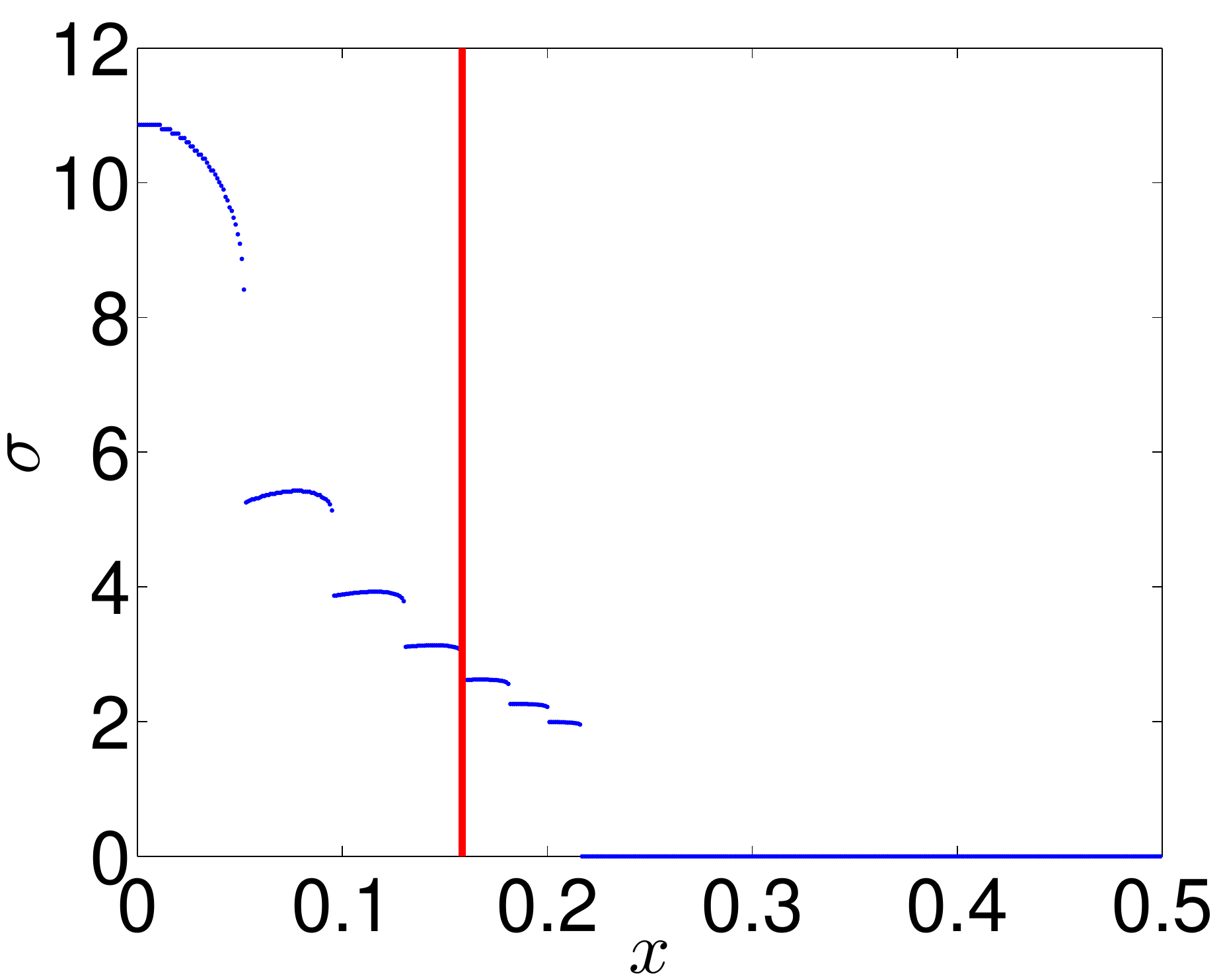}
\includegraphics[width=0.3\textwidth]{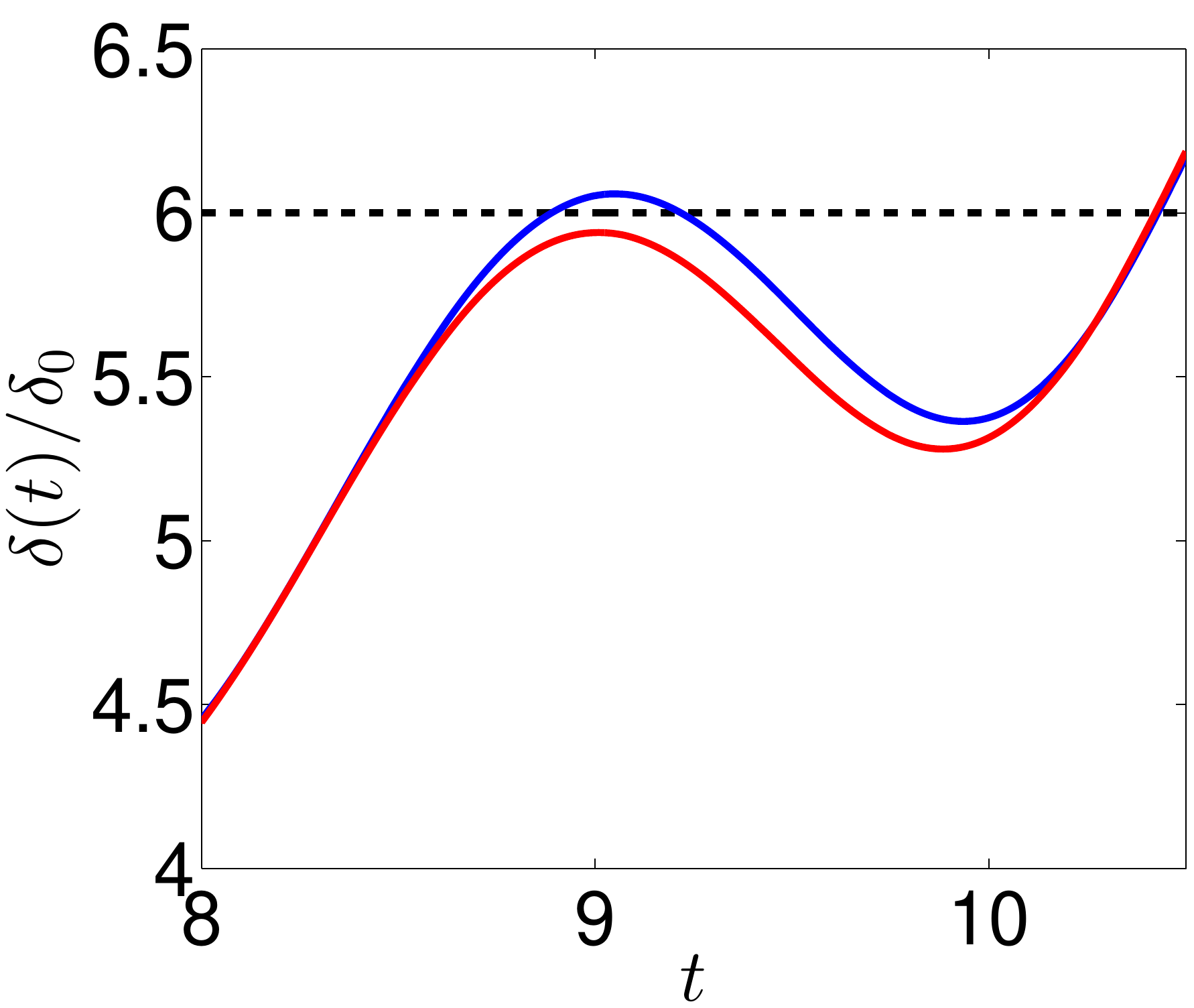}
\caption{Left: Lines of discontinuities in the FSLE field of the autonomous
double gyre flow \eqref{eq:gyre-1}. Middle: A cross section of the
discontinuities along the $x_{2,\textnormal{spec}}=0.48$ line. Right:
The root cause of the jump-discontinuity at $x_{\textnormal{1,dis}}\approx0.1583$:
a tangency of the particle-separation history curve with the $r=6$
horizontal line.}
\label{fig:single_gyre}
\end{figure}

To identify the root cause behind such jump-discontinuities, we focus
on one of the jump locations at $x_{\textnormal{1,dis}}\approx0.1583$,
indicated by the vertical line in the middle plot of Fig.\ \ref{fig:single_gyre}
around which a discontinuity is observed. In the right plot of Fig.
\ref{fig:single_gyre}, we graph the time evolution of the largest
relative particle separation $\delta(t,x_{0})/\delta_{0}$ for two
nearby initial conditions, one on the left and one on the right
of the $x_{\textnormal{1,dis}}\approx0.1583$ line. This plot reveals
a tangency between the $\delta(t;x_{\textnormal{1,dis}},x_{\textnormal{2,spec}})/\delta_{0}$
curve and the $r=6$ horizontal line. A consequence of this tangency
is a sizable jump in the separation time (i.e., the smallest solution
of the equation $\delta(\tau;x_{\textnormal{0}})/\delta_{0}=6$) as
initial conditions are varied across $x_{\textnormal{1,dis}}$.

As we establish later in this paper, the above jump-discontinuities
of the FSLE field are typical. They generically occur along families
of codimension-one surfaces in the phase space of a nonlinear dynamical
system (cf.\ Proposition \ref{prop:degeneracy}).

\subsection{Sensitivity of FSLE field with respect to temporal resolution \label{sec:sensitivity}}

The presence of jump-discontinuities in the FSLE field may appear
to be a strictly cosmetic issue. However, jumps in $\tau(x_{0};\delta_{0},r)$
result in a sensitivity of FSLE calculations with respect to the temporal
resolution of the available flow data.

Indeed, the right subplot of Fig. \ref{fig:single_gyre} shows that
under a course step size in $t$, the first crossing of the $\delta(t)/\delta_{0}=6$
line by the particle-separation history curve will be missed altogether
for an open set of initial conditions. Instead of the correct separation
time, a larger separation time will be recorded. The resulting error
will typically be substantially larger than the time-step used in
the computation.

LaCasce \cite{LaCasce2008} has already observed that FSLE statistics
show sensitivity with respect to the temporal resolution in the range
of smaller $\delta_{0}$ scales. Such sensitivity can be gradually
reduced for analytic and numerical model flows by selecting smaller
and smaller time steps. Step-size reduction, however, is not an option
for \emph{in situ} observational flow data, which comes with a fixed
(and typically course) temporal resolution \cite{LaCasce2008}.

By contrast, the FTLE field is everywhere continuous in the time parameter
$t$ and the initial condition $x_{0}$. Furthermore, errors in an FTLE computation are typically of higher order with respect to the computational time step used in integrating the velocity field.

\subsection{Spurious ridges of FSLE}\label{sec:movesep}

Even in regions where the FSLE field is well-defined and continuous,
FSLE ridges may signal false positives for repelling LCSs. Because
of significant changes in the flow after the separation time is reached
by key trajectories, the FSLE field may even produce such spurious
ridges along \emph{trenches} of the FTLE field.

As an example, consider a two-dimensional model for moving unsteady
separation along a horizontal free-slip wall. The velocity field derives
from the stream-function Hamiltonian
\begin{equation}
H(t,x)=-L\tanh(q_{2}x_{2})\tanh(q_{1}(x_{1}-at)),\label{eq:separationexample}
\end{equation}
where $L$ characterizes the strength of the separation; $q_{1}$
and $q_{2}$ control how localized the impact of separation is on
the flow; and $a$ defines the horizontal speed at which the separation
moves. The flow, therefore, becomes steady in a frame that moves horizontally
with speed $a$.

We fix the parameters $L=4$, $q_{1}=5$, $q_{2}=1$, and $a=10$,
and choose the maximal length of observation time as $T=t-t_{0}=2$.
For this choice of parameters, Fig.\ \ref{fig:failure_vf} shows the
instantaneous velocity field for the model \eqref{eq:separationexample}
at $t=0$.

\begin{figure}
\centering
\includegraphics[width=0.5\textwidth]{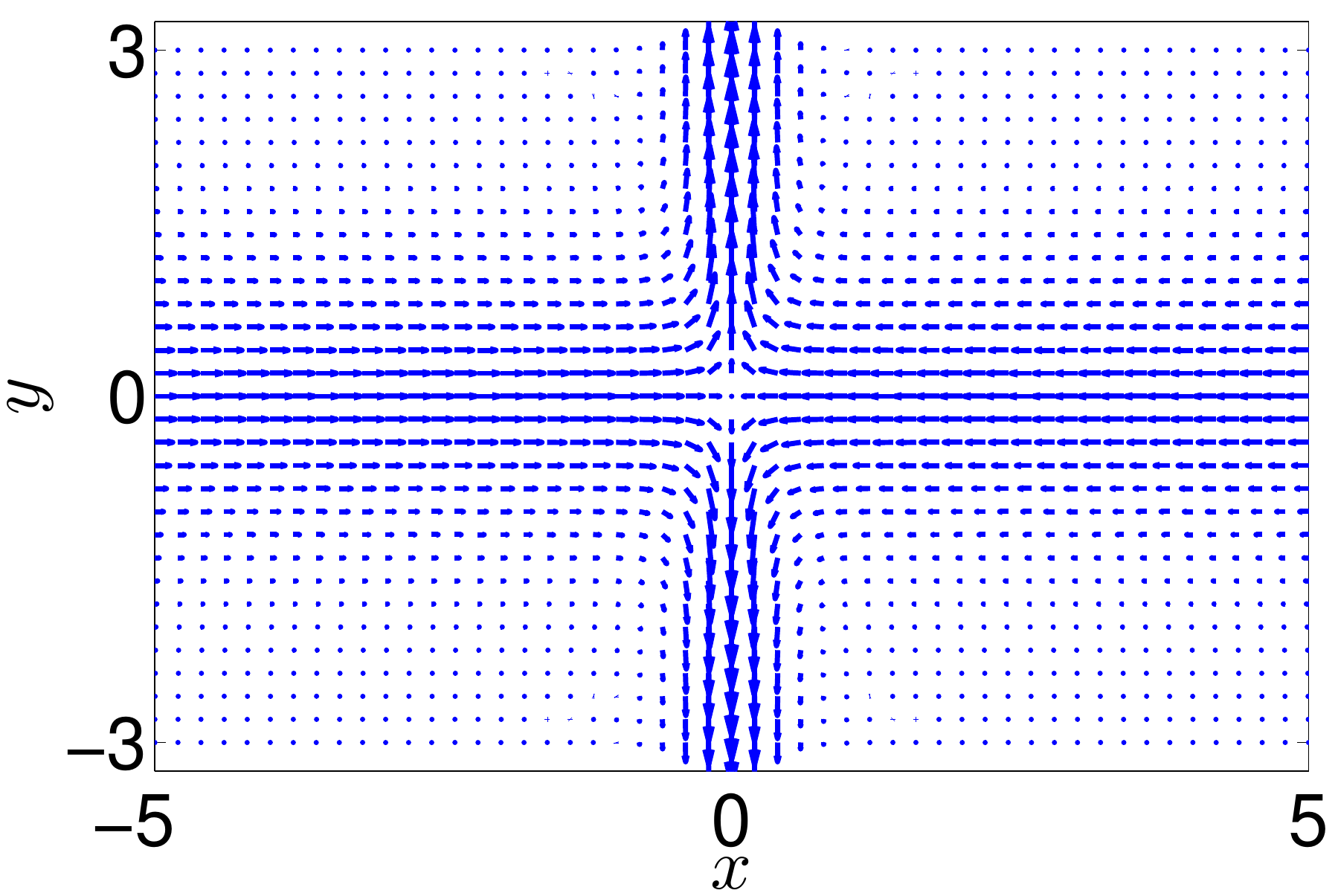}
\caption{Instantaneous velocity field for the moving separation flow \eqref{eq:separationexample}.}
\label{fig:failure_vf}
\end{figure}

Selecting the separation factor as $r=2.3$, we observe in Fig.\ \ref{fig:snow}
(top left) an FSLE ridge along the $x_{1}$-axis, starting at about $0.3$.
Since the separation point moves to the right, initial positions with
larger and larger $x_{1}$-values experience separation at later and
later times. As a result, the height of the FSLE ridge along the $x_{1}$
axis shows large variation. The separation-time plot in Fig.\ \ref{fig:snow}
(top right) highlights this further, indicating a separation-time valley
with increasing bottom-height.

At the same time, by the localized nature of the separation, only
a short segment of the $x_{1}$ axis will generate a ridge for the
FTLE field for any choice of the integration time. This axis segment
is the subset of initial conditions showing the most net separation
over the time interval $T$. The rest of the $x_{1}$-axis is in fact
an FTLE trench, as seen in Fig.\ \ref{fig:snow} (bottom left). For longer
integration times, an increasingly long subset of the $x_{1}$-axis
becomes an FTLE valley, while two nearby FTLE ridges parallel approach
it (Fig.\ \ref{fig:snow}, bottom right). Thus, one cannot even argue
that the FSLE ridge can at least be continued into a nearby, unique
FTLE ridge.

\begin{figure}
\centering
\includegraphics[width=0.49\textwidth]{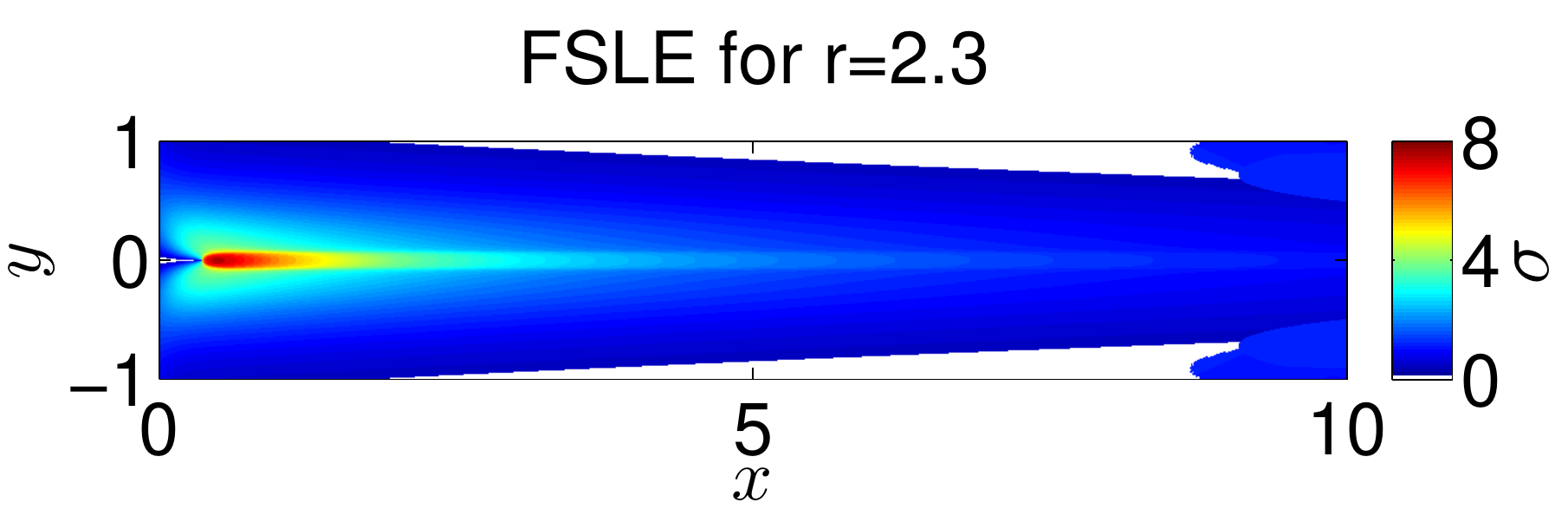}
\includegraphics[width=0.49\textwidth]{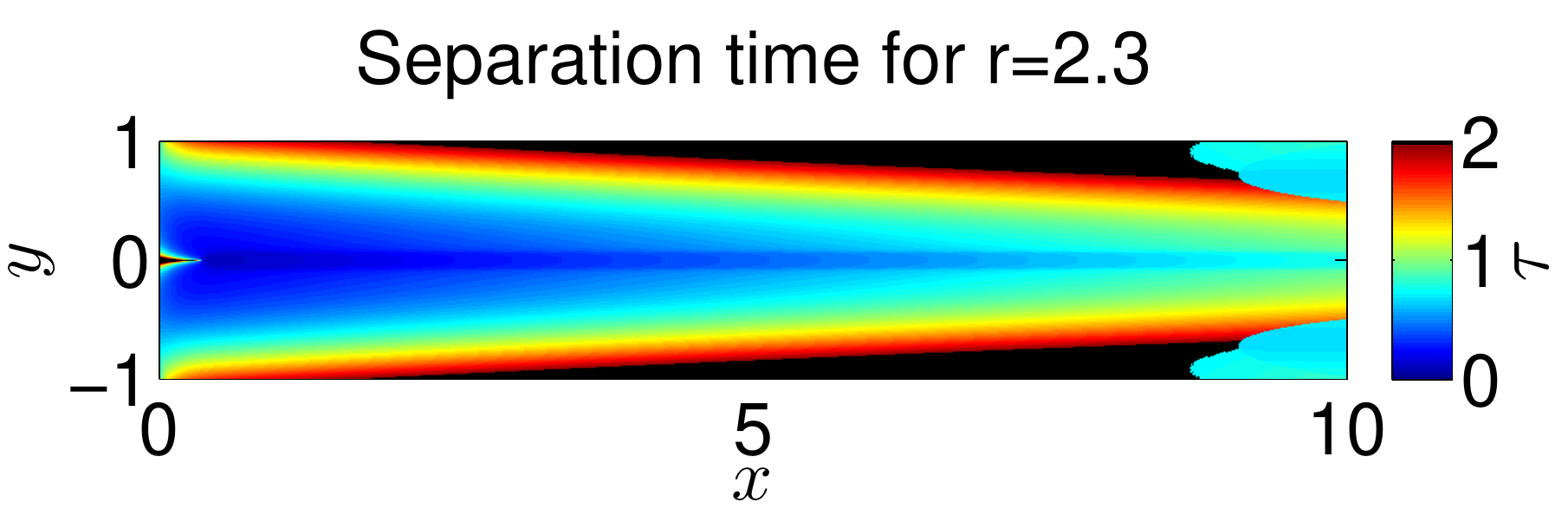}\\
\includegraphics[width=0.49\textwidth]{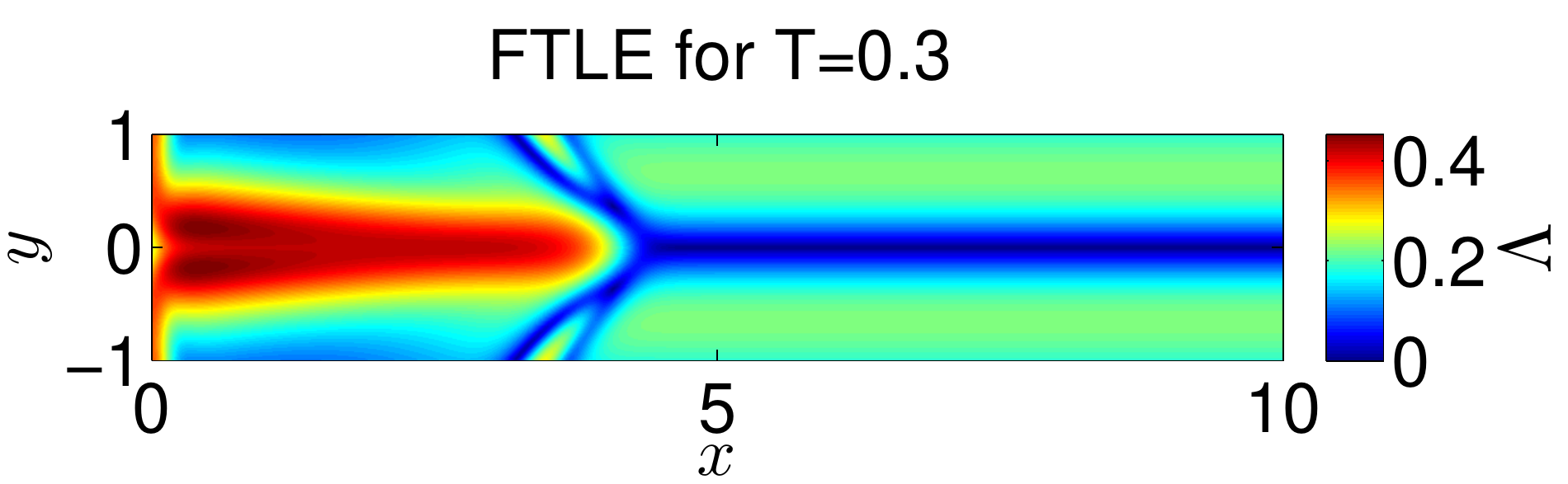}
\includegraphics[width=0.49\textwidth]{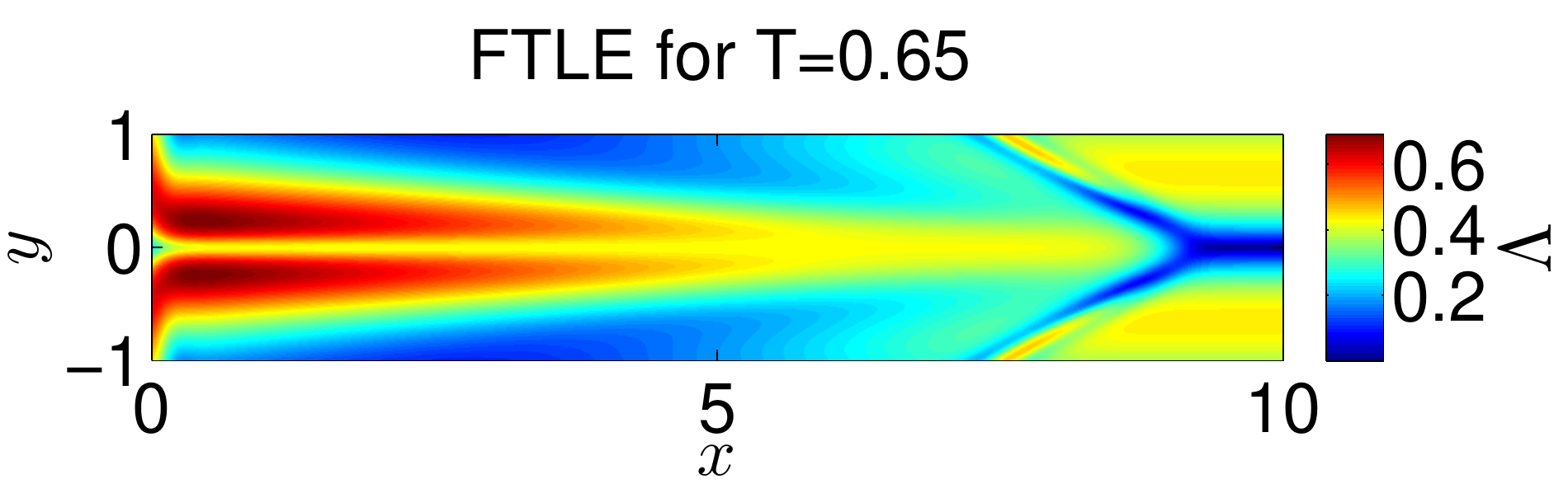}
\caption{An FSLE ridge with large height-variation that does not correspond
to an FTLE ridge. Top: FSLE and separation time distribution for the moving separation flow
\eqref{eq:separationexample} with separation factor $r=2.3$. Note
that the whole of the $x_{1}$ axis is a ridge for the FSLE field
(left), as confirmed by the separation time field (right). Bottom: The FTLE fields for the integration times $T=0.3$ and $T=0.65$ admit a trench along most of the $x_{1}$ axis.}
\label{fig:snow}
\end{figure}

\section{The Infinitesimal-Size Lyapunov Exponent (ISLE)\label{sec:ISLE}}

The examples of Section \ref{sec:FSLE-FTLE} illustrate the need to
clarify the relationship between the FSLE and FTLE fields. The first
challenge is that the FSLE is inherently linked to trajectory separation
resulting from a finite-size initial perturbation $\delta_{0}$ to
the initial condition $x_{0}$. By contrast, the FTLE describes the
separation of trajectories starting infinitesimally close to $x_{0}$.
To close this conceptual gap between the two quantities, we define
the infinitesimal analog of FSLE by taking the $\delta_{0}\to0$ limit
in its definition.

\begin{defn}
We define the \emph{Infinitesimal-Size Lyapunov Exponent (ISLE)} as
\[
\sigma_{0}\left(x_{0},r\right):=\lim_{\delta_{0}\to0}\sigma\left(x_{0};\delta_{0},r\right).
\]
 For the FSLE field to provide a meaningful measure of trajectory
separation at $x_{0}$, the ISLE field must be well-defined at $x_{0}$,
i.e., its defining limit must exist. This is a prerequisite (albeit
no guarantee) for the FSLE to detect hyperbolic LCSs reliably.

We now present a result on the existence, computation and relevance
of the limit defining $\sigma_{0}.$ In formulating these results,
we will use the infinitesimal analog $\tau_{0}(x_{0},r)$ of the finite-size
separation time $\tau(x_{0};\delta_{0},r),$ defined as
\[
\tau_{0}(x_{0},r):=\min\left\{ \left|t-t_{0}\right|:\,\, t>t_{0},\,\,\lambda_{\max}\left(C{}_{t_{0}}^{t}(x_{0})\right)=r^{2}\right\} .
\]
\end{defn}

\begin{thm}[Relation of ISLE to FSLE]
\label{thm:well-behavior} Assume that $\lambda_{\max}\left(C_{t_{0}}^{t_{0}+\tau_{0}(x_{0},r)}(x_{0})\right)$
is a simple eigenvalue and
\begin{equation}
\partial_{t}\lambda_{\max}\left(C_{t_{0}}^{t_{0}+\tau_{0}(x_{0},r)}(x_{0})\right)\neq0.\label{eq:nondeg}
\end{equation}
Then the following hold:\renewcommand{\labelenumi}{(\roman{enumi})}
\begin{enumerate}
\item The ISLE field $\sigma_{0}\left(x_{0},r\right)$ is well-defined and
$C^{2}$ at the point $x_{0}$, and can be computed as
\begin{equation}
\sigma_{0}\left(x_{0},r\right)=\Lambda_{t_{0}}^{t_{0}+\tau_{0}\left(x_{0},r\right)}\left(x_{0}\right)=\frac{\log r}{\tau_{0}\left(x_{0},r\right)},\label{eq:sigma0}
\end{equation}
with $\Lambda_{t_{0}}^{t}$ denoting the FTLE field defined in Eq.
\eqref{eq:FTLE}.
\item The FSLE field $\sigma\left(x_{0};\delta_{0},r\right)$ is also well-defined
and $C^{2}$ at the point $x_{0}$, and satisfies
\begin{equation}
\sigma\left(x_{0};\delta_{0},r\right)=\sigma_{0}\left(x_{0},r\right)+\mathcal{O}\left(\delta_{0}\right).\label{eq:FSLE-ISLE}
\end{equation}

\end{enumerate}
\end{thm}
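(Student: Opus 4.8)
The plan is to reduce both parts of the theorem to a single implicit-function-theorem argument applied to the \emph{maximal relative separation}
\[
\Phi(t,\delta_0):=\max_{|u|=1}\frac{\left|F_{t_0}^t(x_0+\delta_0 u)-F_{t_0}^t(x_0)\right|^2}{\delta_0^2},\qquad\delta_0>0,
\]
extended to $\delta_0=0$ by $\Phi(t,0):=\lambda_{\max}\left(C_{t_0}^t(x_0)\right)$. The key observation is that the separation time $\tau(x_0;\delta_0,r)$ equals the first time $t-t_0$ at which $\Phi(t,\delta_0)=r^2$: the first trajectory on the sphere $|x_1-x_0|=\delta_0$ to reach relative separation $r$ is precisely the one realizing the maximum, and since $\Phi(t_0,\delta_0)=1<r^2$ and $\Phi$ increases through the threshold, the minimum over $x_1$ of the individual hitting times coincides with the hitting time of the maximum. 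Everything then hinges on solving the scalar equation $\Phi(t,\delta_0)=r^2$ for $t$ near the base point $(t_0+\tau_0,0)$, where $\Phi(t_0+\tau_0,0)=\lambda_{\max}\left(C_{t_0}^{t_0+\tau_0}(x_0)\right)=r^2$ holds by the definition of $\tau_0$.

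First I would establish the $C^2$-smoothness of $\Phi$ across $\delta_0=0$. The $C^3$ regularity of $v$ makes $F_{t_0}^t$ of class $C^3$ in $x_0$, so by smoothness of divided differences the normalized displacement $\delta_0^{-1}\bigl(F_{t_0}^t(x_0+\delta_0 u)-F_{t_0}^t(x_0)\bigr)$ is $C^2$ jointly in $(u,\delta_0)$ including $\delta_0=0$, where it equals $DF_{t_0}^t(x_0)u$; its squared norm, the integrand of the maximization, is therefore $C^2$ as well. Its Taylor expansion
\[
\frac{\left|F_{t_0}^t(x_0+\delta_0 u)-F_{t_0}^t(x_0)\right|^2}{\delta_0^2}=u^{\!\top}C_{t_0}^t(x_0)u+\delta_0\,\bigl(DF_{t_0}^t(x_0)u\bigr)^{\!\top}D^2F_{t_0}^t(x_0)[u,u]+\mathcal O(\delta_0^2)
\]
shows that its $\delta_0=0$ value is the Cauchy--Green quadratic form and that the first correction is \emph{linear} in $\delta_0$, which already accounts for the error order asserted in \eqref{eq:FSLE-ISLE}.

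The crux is to transfer this regularity from the integrand to the constrained maximum $\Phi$, and this is exactly where the simplicity hypothesis enters and where I expect the main obstacle to lie. At $(t_0+\tau_0,0)$ the maximizer over the unit sphere is the strongest strain eigenvector $e_{\max}$, and simplicity of $\lambda_{\max}$ makes this maximizer nondegenerate: the Hessian of the Lagrangian for maximizing $u^{\!\top}C u$ on $|u|=1$ is negative definite on the tangent space to the sphere, with definiteness controlled by the spectral gap separating $\lambda_{\max}$ from the remaining eigenvalues. Applying the implicit function theorem to the first-order optimality conditions then yields a $C^1$ maximizer $u^{*}(t,\delta_0)$, and the envelope theorem gives a $C^2$ value function $\Phi(t,\delta_0)$ on a neighborhood of $(t_0+\tau_0,0)$; crucially, the envelope theorem also shows $\partial_{\delta_0}\Phi(t,0)$ is finite. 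The delicate point is to verify that this perturbation argument survives the change in the definition of $\Phi$ at $\delta_0=0$ rather than merely holding for each fixed $\delta_0>0$.

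With $\Phi\in C^2$ in hand, the non-degeneracy assumption \eqref{eq:nondeg} reads $\partial_t\Phi(t_0+\tau_0,0)=\partial_t\lambda_{\max}\left(C_{t_0}^{t_0+\tau_0}(x_0)\right)\neq0$, so the implicit function theorem produces a unique $C^2$ branch $t=T(\delta_0)$ with $T(0)=t_0+\tau_0$ solving $\Phi(t,\delta_0)=r^2$. Because $\lambda_{\max}\left(C_{t_0}^t(x_0)\right)<r^2$ strictly on $[t_0,t_0+\tau_0)$ (the value is $1$ at $t_0$ and $\tau_0$ is its first crossing), uniform convergence $\Phi(\cdot,\delta_0)\to\lambda_{\max}\left(C_{t_0}^{\cdot}(x_0)\right)$ on compact $t$-intervals guarantees that for small $\delta_0$ the branch $T(\delta_0)$ is the \emph{first} crossing, hence $T(\delta_0)=t_0+\tau(x_0;\delta_0,r)$. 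The conclusions now follow: letting $\delta_0\to0$ gives $\tau(x_0;\delta_0,r)\to\tau_0(x_0,r)$, whence $\sigma_0(x_0,r)=\log r/\tau_0(x_0,r)$, which equals $\Lambda_{t_0}^{t_0+\tau_0}(x_0)$ since $\lambda_{\max}=r^2$ at $t_0+\tau_0$, establishing (i); carrying $x_0$ as an additional parameter through the same implicit function theorem yields the $C^2$-dependence of both $\tau_0(x_0,r)$ and $T(\delta_0,x_0)$ on $x_0$; and the $C^1$ expansion $T(\delta_0)=t_0+\tau_0+\mathcal O(\delta_0)$ gives $\sigma(x_0;\delta_0,r)=\log r/\bigl(\tau_0+\mathcal O(\delta_0)\bigr)=\sigma_0(x_0,r)+\mathcal O(\delta_0)$, which is \eqref{eq:FSLE-ISLE}.
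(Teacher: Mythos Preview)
Your proposal is correct and follows essentially the same route as the paper: Taylor-expand the squared separation to reduce to the Cauchy--Green quadratic form plus an $\mathcal{O}(\delta_0)$ remainder, identify $\tau_0$ via $\lambda_{\max}=r^2$, and apply the implicit function theorem under condition \eqref{eq:nondeg} to continue the separation time smoothly in $\delta_0$ and $x_0$. You are more explicit than the paper about why the maximum over directions is itself $C^2$ (this is precisely where the simplicity hypothesis enters, via nondegeneracy of the constrained maximizer) and about why the continued branch is the \emph{first} crossing---both points the paper's own proof passes over in a sentence.
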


\begin{proof}
See Appendix \ref{sec:appendix_a}.
\end{proof}

\begin{rem}
\prettyref{thm:well-behavior} shows that computing the ISLE field,
wherever it is well-defined, gives a close and smooth approximation
to the FSLE field in the same domain. The advantage of the ISLE is
that it is a pointwise indicator of finite-scale deformation, independent
of the choice of initial grid size. This makes the ISLE field amenable
to further mathematical analysis.
\end{rem}

\begin{rem}
\label{rmk:strainformula} As we show in Appendix A, condition \eqref{eq:nondeg}
can also be written in the equivalent form
\begin{equation}
\left\langle DF_{t_{0}}^{t}(x_{0})e_{\max}\left(C_{t_{0}}^{t}(x_{0})\right),S\left(F_{t_{0}}^{t}(x_{0}),t\right)DF_{t_{0}}^{t}(x_{0})e_{\max}\left(C_{t_{0}}^{t}(x_{0})\right)\right\rangle \vert_{t=t_{0}+\tau_{0}(x_{0},r)}\neq0,\label{eq:nondeg2}
\end{equation}
where $DF_{t_{0}}^{t}(x_{0})$ denotes the flow gradient, and $S(x,t)=\frac{1}{2}\left[\nabla v(x,t)+\left(\nabla v(x,t)\right)^{T}\right]$
is the Eulerian rate-of-strain tensor. Formula \eqref{eq:nondeg2}
reveals that the ISLE and FSLE fields are well-defined and smooth
at initial locations $x_{0}$ where the direction of largest Lagrangian
strain is \emph{not} mapped by the linearized flow map into a direction
of zero instantaneous Eulerian strain.

The non-degeneracy condition \eqref{eq:nondeg} (or, equivalently,
\eqref{eq:nondeg2}) will fail along codimension-one surfaces of initial
conditions in the phase space. The following proposition spells this
fact out in more precise terms.\end{rem}
\begin{prop}[Degeneracy of FSLE along hypersurfaces]
\label{prop:degeneracy}The non-degeneracy condition \eqref{eq:nondeg}
for the well-posedness of the FSLE field is generically violated along
families of $(n-1)$-dimensional hypersurfaces in the flow domain
$D\subset\mathbb{R}^{n}$. These hypersurfaces satisfy
\begin{equation}
\partial_{t}\lambda_{\max}\left(C_{t_{0}}^{t}(x_{0})\right)=0,\qquad\partial_{t}^{2}\lambda_{\max}\left(C_{t_{0}}^{t}(x_{0})\right)\neq0,\qquad\partial_{x_{0}}\lambda_{\max}\left(C{}_{t_{0}}^{t}(x_{0})\right)\neq0,\label{eq:degsurf}
\end{equation}
 with the times $t=t_{0}+\tau_{0}(x_{0},r)$ substituted after the
differentiations in \eqref{eq:degsurf}.\end{prop}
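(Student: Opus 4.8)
The plan is to lift the analysis to the extended phase space $D\times\mathbb{R}^{+}$ and to read the degeneracy locus of \eqref{eq:nondeg} as a fold-type critical set of the scalar field
\[
G(x_{0},t):=\lambda_{\max}\left(C_{t_{0}}^{t}(x_{0})\right).
\]
Under the simple-eigenvalue hypothesis of \prettyref{thm:well-behavior}, $\lambda_{\max}$ inherits the regularity of its matrix argument, so the $C^{3}$ assumption on $v$ renders $G$ of class $C^{2}$ jointly in $(x_{0},t)$, and $\partial_{t}G$, $\partial_{t}^{2}G$, $\partial_{x_{0}}G$ are all continuous near the points of interest. Since $G(x_{0},t_{0}+\tau_{0}(x_{0},r))=r^{2}$ by the definition of $\tau_{0}$, the non-degeneracy condition \eqref{eq:nondeg} is precisely $\partial_{t}G\neq0$ at the first crossing of the level $r^{2}$; its failure is the simultaneous occurrence of $G=r^{2}$ and $\partial_{t}G=0$, i.e.\ a tangency of the separation curve $t\mapsto G(x_{0},t)$ with the horizontal line $r^{2}$, exactly as seen in the right panel of Fig.\ \ref{fig:single_gyre}.

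First I would fix an admissible level. By Sard's theorem applied to $G:D\times\mathbb{R}^{+}\to\mathbb{R}$, for almost every $r>1$ the value $r^{2}$ is a regular value, so $\mathcal{S}_{r}:=G^{-1}(r^{2})$ is a smooth $n$-dimensional hypersurface on which $\nabla_{(x_{0},t)}G\neq0$. On the degeneracy set $\partial_{t}G=0$, hence $\nabla_{(x_{0},t)}G=(\partial_{x_{0}}G,0)$, and regularity forces $\partial_{x_{0}}G\neq0$; this already yields the third relation in \eqref{eq:degsurf}. Next I would restrict $\partial_{t}G$ to $\mathcal{S}_{r}$. Because the vertical vector $(0,1)$ is tangent to $\mathcal{S}_{r}$ wherever $\partial_{t}G=0$, and $d(\partial_{t}G)(0,1)=\partial_{t}^{2}G$, the value $0$ is automatically a regular value of $\partial_{t}G\vert_{\mathcal{S}_{r}}$ at every degeneracy point with $\partial_{t}^{2}G\neq0$. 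There the locus
\[
\mathcal{D}_{r}:=\{(x_{0},t)\in\mathcal{S}_{r}:\,\partial_{t}G(x_{0},t)=0\}
\]
is a smooth $(n-1)$-dimensional submanifold of $D\times\mathbb{R}^{+}$. The additional vanishing $\partial_{t}^{2}G=0$ imposes a third independent condition, so the exceptional set $\{G=r^{2},\,\partial_{t}G=0,\,\partial_{t}^{2}G=0\}$ has dimension $n-2$; away from it the second relation $\partial_{t}^{2}G\neq0$ of \eqref{eq:degsurf} holds on an open dense subset of each component of $\mathcal{D}_{r}$, corresponding to a generic quadratic tangency.

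The geometric core is then to push this picture down to $D$ via $\pi(x_{0},t)=x_{0}$. I would show that $\pi\vert_{\mathcal{D}_{r}}$ is an immersion precisely where $\partial_{t}^{2}G\neq0$: a vertical tangent $(0,\dot{t})\in T\mathcal{D}_{r}$ must annihilate both $\nabla G$ (automatic, since $\partial_{t}G=0$ on $\mathcal{D}_{r}$) and $\nabla(\partial_{t}G)$, the latter giving $\partial_{t}^{2}G\,\dot{t}=0$, so $\partial_{t}^{2}G\neq0$ forces $\dot{t}=0$. Hence $\pi$ has no vertical kernel and carries $\mathcal{D}_{r}$ locally diffeomorphically onto an $(n-1)$-dimensional hypersurface in $D$, on which all three relations of \eqref{eq:degsurf} hold after the substitution $t=t_{0}+\tau_{0}(x_{0},r)$. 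The several branches arising from distinct crossings of the level, together with the variation over admissible $r$, constitute the announced families; conversely, $\partial_{t}^{2}G\neq0$ is exactly the fold condition that produces the jump of $\tau_{0}$ across each surface, tying these hypersurfaces to the discontinuities of Section \ref{sec:non-smoothness}.

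The main obstacle I anticipate is twofold. First, $\tau_{0}$ is defined by a global minimization (the first crossing), whereas the transversality analysis is local on $\mathcal{S}_{r}$; I would need to confirm that the components of $\mathcal{D}_{r}$ reached by the first-crossing graph are exactly those on which the substitution $t=t_{0}+\tau_{0}(x_{0},r)$ is legitimate, so that a later crossing does not masquerade as the separation time on part of a surface. Second, making \textquotedblleft generic\textquotedblright{} precise requires committing to a genericity class---either the parameter $r$ through Sard, or the vector field $v$ in a suitable $C^{k}$ topology through Thom jet-transversality---and then arguing that the codimension-three set where $\partial_{t}^{2}G$ also vanishes is genuinely negligible, so that it forms only the lower-dimensional boundary where neighboring hypersurfaces of the family meet or develop cusps, rather than an obstruction to the codimension-one structure.
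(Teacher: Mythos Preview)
Your argument is correct and reaches the same conclusion through the same non-degeneracy conditions \eqref{eq:degsurf}, but your packaging is more geometric than the paper's. The paper simply writes down the two scalar equations $G(x_{0},\tau_{0})=r^{2}$ and $\partial_{t}G(x_{0},\tau_{0})=0$ in the $n+1$ unknowns $(\tau_{0},x_{0})$ and applies the implicit function theorem directly, solving for $(\tau_{0},x_{0}^{1})$ as a smooth function of the remaining $n-1$ coordinates; the $2\times2$ Jacobian has vanishing $(1,1)$ entry on the degeneracy set, so its determinant collapses to the product $\partial_{t}^{2}G\cdot\partial_{x_{0}^{1}}G$, yielding exactly your two inequalities after a coordinate relabelling. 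Your level-set-and-projection route (cut out $\mathcal{S}_{r}$, then $\mathcal{D}_{r}$ inside it, then check that $\pi\vert_{\mathcal{D}_{r}}$ has no vertical kernel) is the same computation in regular-value language, with the bonus that it makes the fold-singularity picture explicit and ties it visibly to the tangency mechanism of Fig.~\ref{fig:single_gyre}. You are also considerably more careful about the word ``generic'' than the paper, which leaves it informal and merely assumes a seed solution $(\bar{\tau}_{0},\bar{x}_{0})$ exists; your first-crossing caveat is likewise handled in the paper only by the one-line remark that minimality is an open property. One technical caution on your Sard step: applying Sard to $G:D\times\mathbb{R}^{+}\to\mathbb{R}$ requires $G\in C^{k}$ with $k\geq n+1$, whereas the paper's standing hypothesis $v\in C^{3}$ only yields $G\in C^{2}$, so for $n\geq2$ you would need either to strengthen the smoothness assumption on $v$ or to fall back on your alternative jet-transversality route in the class of vector fields.
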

\begin{proof}
See Appendix \ref{sec:appendix_a}.
\end{proof}

\begin{rem}
\label{rmk:sensitivity}The hypersurfaces defined by formula \eqref{eq:degsurf}
define locations of jump-dis\-con\-ti\-nu\-ities for the FSLE field. In a
neighborhood of these surfaces, the FSLE will show sensitive dependence
on the numerical step-size used in its computation. This generalizes
our observations made in \prettyref{sec:sensitivity} from a specific
two-dimensional, steady flow model to unsteady flows of arbitrary
dimension. The resulting sensitivity with respect to the temporal
resolution of the data will be particularly pronounced near hyperbolic
LCSs, where the separation time $\tau(x_{0};\delta_{0},r)$ is low,
and hence errors in its computation will cause significant errors
in the FSLE. \end{rem}
\begin{example}
In the double-gyre flow \eqref{eq:gyre-1}, the jump condition \eqref{eq:degsurf}
holds along one-di\-men\-sion\-al curves, as is apparent from Fig.\ \ref{fig:single_gyre-1}.
As a consequence, the ISLE field is not smooth along these locations,
which results in jump-discontinuities in the FSLE field along crossing
curves (cf.\ Fig.\ \ref{fig:single_gyre}).
\end{example}

\begin{figure}
\centering
\includegraphics[width=0.4\textwidth]{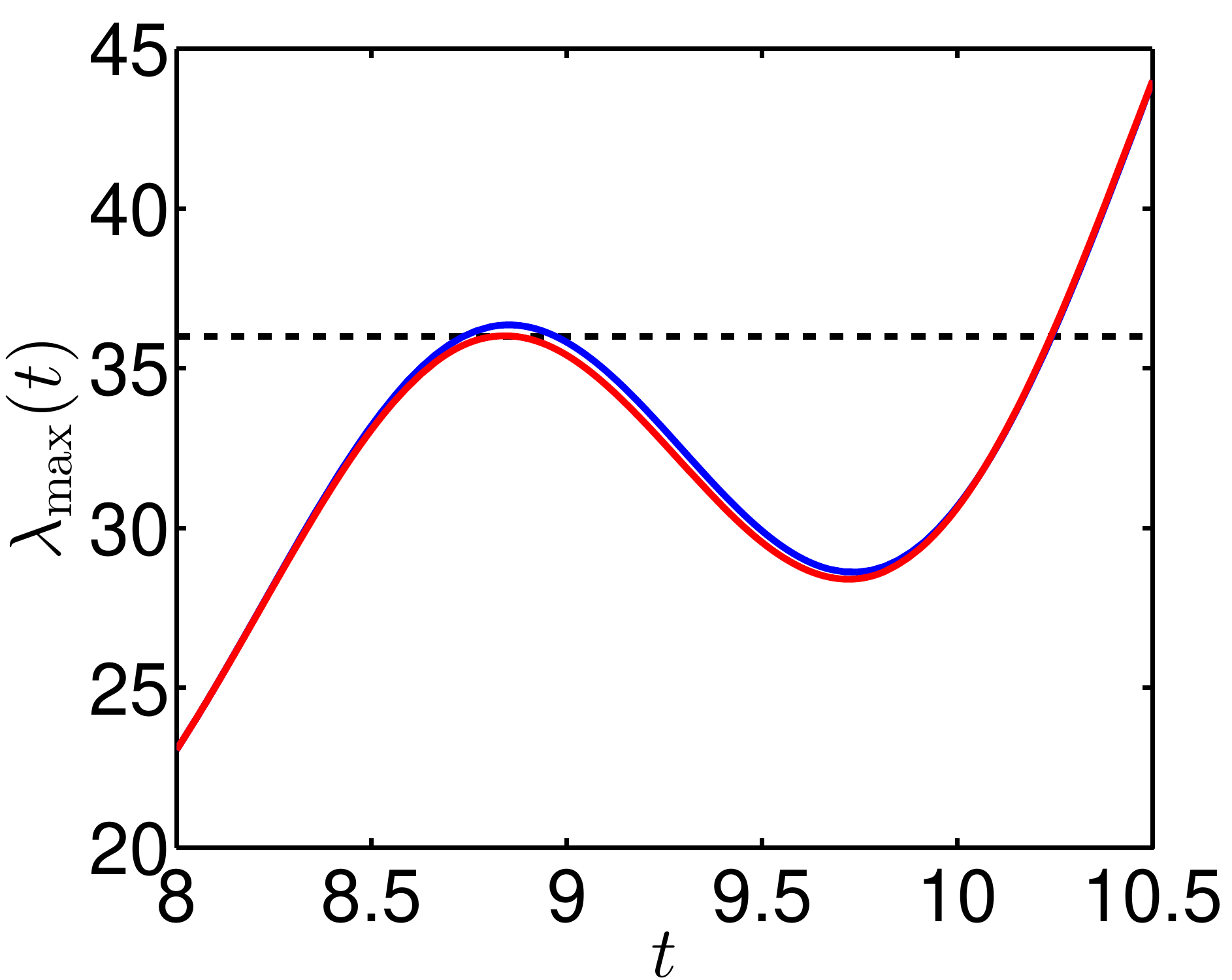}
\caption{An occurrence of the first two FSLE jump-conditions in \eqref{eq:degsurf}
along the $x_{2,\textnormal{spec}}=0.48$ line in the double-gyre
flow \eqref{eq:gyre-1}. The third jump-condition in \eqref{eq:degsurf}
is also satisfied, as the FTLE field does not have a stationary point
at this location.}
\label{fig:single_gyre-1}
\end{figure}

\section{Ridges as invariant manifolds under the gradient flow\label{sec:Ridges}}

Ridges of the FTLE field are expected to signal hyperbolic LCSs, as
initially proposed in \cite{Haller2001} (see also \cite{Shadden2011}
and \cite{Peacock2013}). While this expectation
is often justified, more recent work has revealed that FTLE ridges
may also produce both false negatives and false positives in LCS detection
\cite{Haller2011}. False positives can be filtered out by verifying
further conditions along FTLE ridges \cite{Haller2011,Haller2000-1,Farazmand2012,Karrasch2012}.
False negatives can be avoided by using more advanced, variational
LCS methods that do not rely on FTLE ridges. These methods are supported
by theorems, and render LCSs in a parametrized form, as solutions
of differential equations \cite{Haller2012,Farazmand2012-1,Farazmand2013}.

By analogy with FTLE ridges, FSLE ridges have also been assumed to
signal hyperbolic LCSs \cite{Joseph2002,dOvidio2004,Lehahn2007,Bettencourt2013,Lai2011}.
In view of the differences between the FSLE and FTLE surveyed in Section
\ref{sec:FSLE-FTLE}, a strict analogy between the ridges of the two
fields \emph{cannot} hold. Below we establish conditions under which
an FSLE ridge does signal a nearby FTLE ridge, on which further hyperbolicity
tests can be performed to ascertain the existence of a hyperbolic
LCS in the flow.

Various ridge definitions are used in topology and visualization (cf.
\cite{Eberly1994} for a general survey and \cite{Schindler2012}
for an LCS-related review). Here we introduce a new definition that
is particularly well-suited for ridge-continuation from one scalar
field to another. Specifically, we view ridges of a scalar function
$f(x)$ as codimension-one attracting invariant manifolds of the gradient
dynamical system associated with $f(x)$. The following definition
formalizes this view in mathematical terms, motivated by the FTLE-ridge
extraction technique devised by \cite{Mathur2007}.
\begin{defn}[Ridge]\label{def:ridge}
Let $f\colon\R^{n}\to\R$ be a class $C^{p}$ function
with $p\geq2$. Let $\mathcal{M}\subset\R^{n}$ be a compact, codimension-one
manifold, whose boundary $\partial\mathcal{M}$ is a compact, codimension-two
manifold without boundary. We call $\mathcal{M}$ a \emph{ridge} for
the function $f$ if both $\mathcal{M}$ and $\partial\mathcal{M}$
are normally attracting invariant manifolds for the gradient dynamical
system
\begin{equation}
\dot{x}=\nabla f(x).\label{eq:gradsystem}
\end{equation}

\end{defn}
By invariance of a manifold, we mean that trajectories of \eqref{eq:gradsystem}
starting in the manifold never leave it in either time direction.
This implies that the gradient vector field $\nabla f(x)$
is contained in the tangent space $T_{x}\mathcal{M}$ at each point
$x\in\mathcal{M}$. In addition, at each $x\in\partial\mathcal{M}$,
we must also have $\nabla f(x)\in T_{x}\partial\mathcal{M}$.

By normal attraction for $\mathcal{M}$, we mean that contraction
rates normal to $\mathcal{M}$ dominate any possible contraction rates
along $\mathcal{M}$ \cite{Fenichel1971}. Normal attraction for $\mathcal{\partial M}$
represents the same requirement, also implying that any contraction
rate within $\mathcal{\partial M}$ must be weaker than contraction
rates within $\mathcal{M}$ normal to its boundary $\mathcal{\partial M}$.

Sketched in the left plot of Fig.\ \ref{fig:manifold}, the manifold
forming the ridge $\mathcal{M}$ is robust under small perturbations
to $f$ in the following sense.
\begin{prop}[Persistence of ridges]
\label{prop:ridgepers0} A ridge in the sense of Definition \ref{def:ridge}
perturbs smoothly into a nearby $C^{1}$-close ridge under a small
enough $C^{1}$ perturbation to the function $f(x).$
\begin{proof}
See Appendix \ref{sec:appendix_b}.
\end{proof}
\end{prop}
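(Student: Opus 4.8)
The plan is to translate Definition~\ref{def:ridge} into the language of normally hyperbolic invariant manifold (NHIM) theory and then apply the corresponding persistence theorem \cite{Fenichel1971}. By hypothesis $\mathcal{M}$ is a compact invariant manifold of the gradient flow $\dot{x}=\nabla f(x)$ along which the normal contraction rates dominate all tangential rates, which is exactly the defining property of a normally attracting NHIM; the same holds for $\partial\mathcal{M}$. A perturbation of $f$ induces a perturbation of the vector field $\nabla f$, with one derivative lost (a $C^{k}$ perturbation of $f$ yields a $C^{k-1}$ perturbation of $\nabla f$), and since $\nabla\tilde{f}$ is again a gradient field, persistence of the ridge reduces to persistence of the NHIM pair $(\mathcal{M},\partial\mathcal{M})$ under a small perturbation of the generating vector field. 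The perturbed invariant manifold produced by the theorem will be the candidate ridge $\tilde{\mathcal{M}}$ for $\tilde{f}$, and the matching of regularities between $f$ and $\nabla f$ is routine bookkeeping to be carried out at the end.

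First I would establish persistence of the boundary. Because $\partial\mathcal{M}$ is a compact, boundaryless, normally attracting invariant manifold, the classical NHIM persistence theorem applies without modification and yields a nearby, $C^{1}$-close invariant manifold $\partial\tilde{\mathcal{M}}$ that is again normally attracting for $\nabla\tilde{f}$, since normal attraction is an open condition on the linearized flow. This disposes of the codimension-two part of the ridge.

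The main obstacle is the interior manifold $\mathcal{M}$, which has a boundary, whereas the cleanest persistence statements are for compact boundaryless manifolds. Here the difficulty is compounded by the fact that $\partial\mathcal{M}$ is itself invariant, so $\nabla f$ is tangent to $\partial\mathcal{M}$ rather than crossing it, and neither the inflowing nor the overflowing hypothesis holds on $\partial\mathcal{M}$ directly. To resolve this I would remove a collar of width $\delta$ around $\partial\mathcal{M}$ to obtain a slightly smaller submanifold $\mathcal{M}_{\delta}\subset\mathcal{M}$. Since the flow on $\mathcal{M}$ runs toward $\partial\mathcal{M}$, the vector field points strictly out of $\mathcal{M}_{\delta}$ across its inner boundary, so $\mathcal{M}_{\delta}$ is an overflowing invariant manifold, to which Fenichel's overflowing-manifold theorem applies and produces a $C^{1}$-close persistent invariant manifold $\tilde{\mathcal{M}}_{\delta}$. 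I would then let $\delta\to0$ and glue the nested family $\{\tilde{\mathcal{M}}_{\delta}\}$ onto the already-persistent boundary $\partial\tilde{\mathcal{M}}$ from the previous step; the rate inequality in Definition~\ref{def:ridge} (contraction within $\partial\mathcal{M}$ weaker than contraction within $\mathcal{M}$ normal to $\partial\mathcal{M}$) guarantees the spectral gap that makes this limit and gluing well-defined, uniqueness of the overflowing manifolds ensuring that the pieces agree on overlaps and assemble into a single compact $C^{1}$ manifold $\tilde{\mathcal{M}}$ with boundary $\partial\tilde{\mathcal{M}}$. This gluing step is where I expect the real work to lie.

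Finally I would check that $\tilde{\mathcal{M}}$ is genuinely a ridge for $\tilde{f}$ in the sense of Definition~\ref{def:ridge}, and not merely an invariant manifold. Invariance under $\dot{x}=\nabla\tilde{f}$ is supplied by the persistence theorem, which forces $\nabla\tilde{f}$ to lie in $T_{x}\tilde{\mathcal{M}}$ and, on the boundary, in $T_{x}\partial\tilde{\mathcal{M}}$. Normal attraction of both $\tilde{\mathcal{M}}$ and $\partial\tilde{\mathcal{M}}$, together with the required domination of the tangential rates, then persists because all of these are open spectral-gap conditions on the linearization and therefore survive a sufficiently small perturbation. Collecting these facts shows that $\tilde{\mathcal{M}}$ is a $C^{1}$-close ridge for $\tilde{f}$, completing the argument.
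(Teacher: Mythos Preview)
Your proposal is correct and shares the paper's overall framework: both arguments first invoke Fenichel's theorem directly on the compact boundaryless manifold $\partial\mathcal{M}$ to obtain its unique persistent image, and both then address the manifold-with-boundary $\mathcal{M}$ via a modification that makes Fenichel applicable. The difference lies in that modification. You \emph{shrink} $\mathcal{M}$ by excising a collar of $\partial\mathcal{M}$, observe that the resulting $\mathcal{M}_\delta$ is overflowing (since the internal flow runs toward $\partial\mathcal{M}$), persist each $\mathcal{M}_\delta$, and then face the task of taking $\delta\to 0$ and gluing the nested family onto the perturbed boundary. The paper instead \emph{enlarges} $\mathcal{M}$ slightly past $\partial\mathcal{M}$ to a manifold $\mathcal{N}$; because $\partial\mathcal{M}$ is normally attracting in the ambient space, the flow on the extension points back toward $\mathcal{M}$, so $\mathcal{N}$ is inflowing and persists (non-uniquely) as $\overline{\mathcal{N}}$. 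The key observation is then that the uniquely perturbed boundary $\overline{\partial\mathcal{M}}$, being invariant and lying in the basin of attraction of $\overline{\mathcal{N}}$, must sit inside $\overline{\mathcal{N}}$; the perturbed ridge $\overline{\mathcal{M}}$ is simply the region of $\overline{\mathcal{N}}$ that $\overline{\partial\mathcal{M}}$ bounds. This enlarge-and-cut trick completely sidesteps the limit-and-glue step that you rightly flag as the hard part of your argument, at the modest cost of accepting non-uniqueness of $\overline{\mathcal{N}}$ (which is harmless, since the cut by the \emph{unique} $\overline{\partial\mathcal{M}}$ restores uniqueness of $\overline{\mathcal{M}}$).
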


\begin{figure}
\centering
\includegraphics[width=0.4\textwidth]{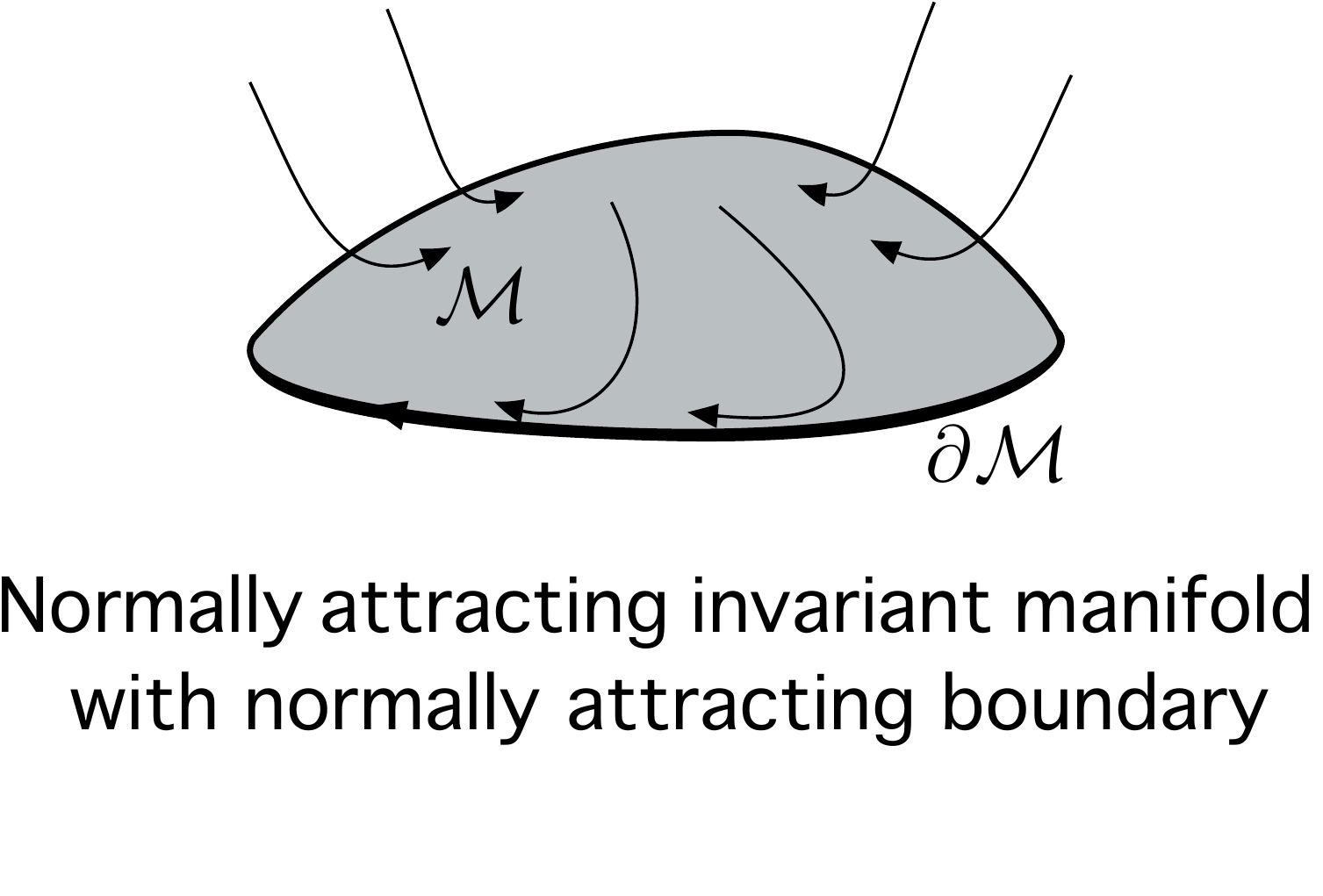}\qquad
\includegraphics[width=0.45\textwidth]{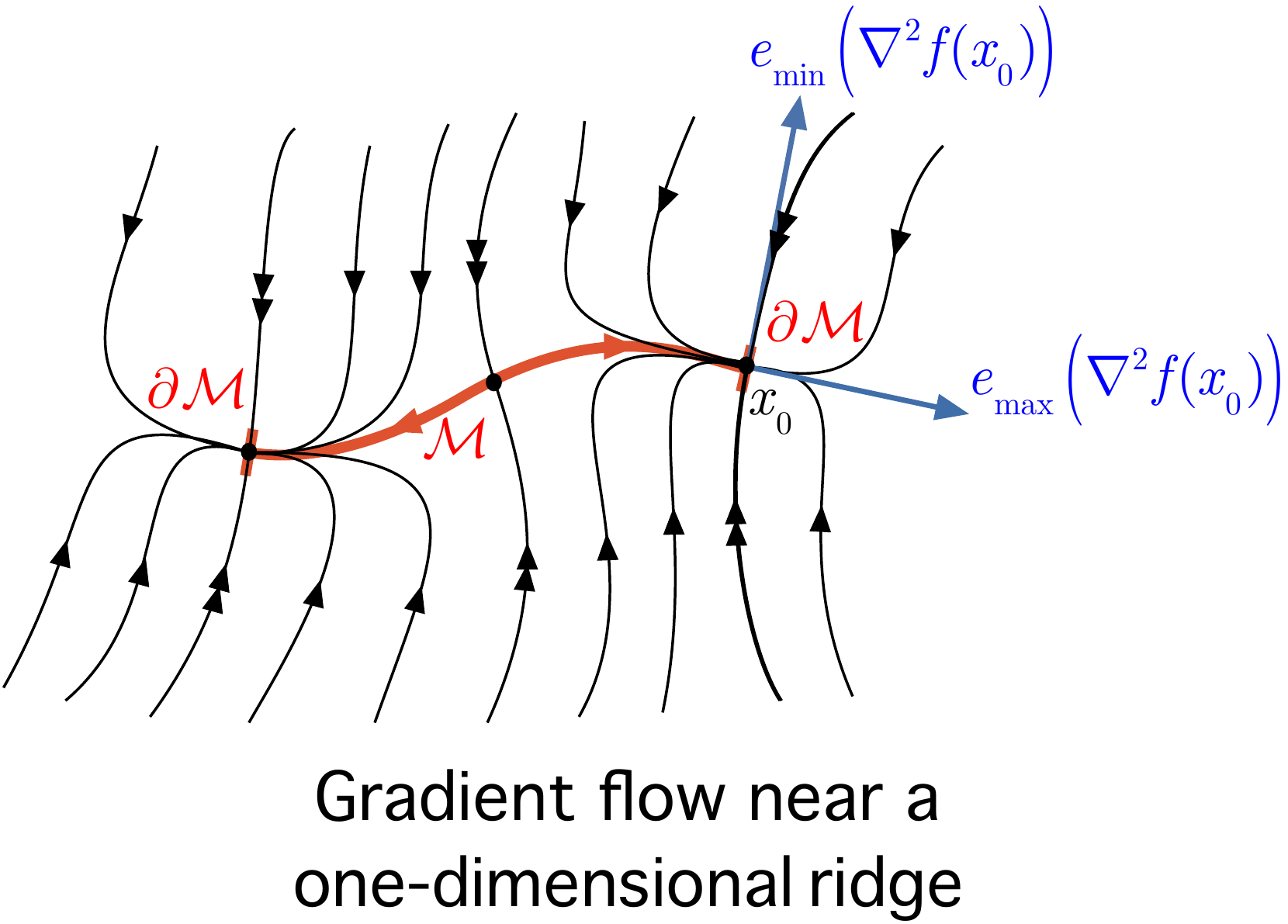}
\caption{Left: Codimension-one, normally attracting invariant manifold with
a normally attracting boundary for the gradient flow \eqref{eq:gradsystem}.
Right: The geometry of the gradient flow \eqref{eq:gradsystem} near
a one-dimensional ridge $\mathcal{M}$.}
\label{fig:manifold}
\end{figure}

The advantage of \prettyref{def:ridge} is that it guarantees robustness
for the ridge based on powerful persistence results for normally hyperbolic
invariant manifolds (see Proposition \ref{prop:ridgepers0}). Other
available ridge definitions do not provide a well-defined set of conditions
for ridge-persistence under changes in the underlying scalar field;
only partial results exist for specific cases \cite{Damon1999,Norgard2013}.

On the other hand, verifying Definition \ref{def:ridge} on a ridge-candidate
set $\mathcal{M}$ involves the computation of Lyapunov-type numbers
that guarantee normal hyperbolicity \cite{Fenichel1971}. Computing
these numbers can be laborious, requiring the numerical solution of
trajectories of the gradient system \eqref{eq:gradsystem} in $\mathcal{M}$.

This computational complexity is absent in the frequent case when
all forward-time limit sets for trajectories of \eqref{eq:gradsystem}
in $\mathcal{M}$ are fixed points. In that case, it is sufficient
to verify that the normal attraction rate to $\mathcal{M}$ and to
$\mathcal{\partial M}$ at each fixed point dominates any potential
tangential contraction rate within these manifolds at the fixed point.
This is because Lyapunov-type numbers associated with a trajectory
coincide with the Lyapunov-type numbers computed on the limit set
of the trajectory \cite{Fenichel1971}.

In the case of a one-dimensional ridge-candidate $\mathcal{M}$, all
limit sets of system \eqref{eq:gradsystem} within $\mathcal{M}$
are necessarily fixed points (see Fig.\ \ref{fig:manifold}, right).
Then, we obtain the following readily verifiable ridge criterion that
does not require the numerical solution of the gradient system \eqref{eq:gradsystem}.
\begin{prop}[Existence of one-dimensional ridges]
\label{prop:2Dridge} Let $f\colon\R^{2}\to\R$ be a class $C^{2}$
function, and let $\mathcal{M}\subset\R^{2}$ be a compact curve with
boundary. Assume that\renewcommand{\labelenumi}{(\roman{enumi})}
\begin{enumerate}
\item $\nabla f(x)\in T_{x}\mathcal{M}$ for all $x\in\mathcal{M}$;
\item $\nabla f(x)=0$ and $\lambda_{\mathrm{max}}\left(\nabla^{2}f(x)\right)<0$
for both points $x\in\mathcal{\partial M}$.
\item For all points $x_{0}\in\mathcal{M}$, where $\nabla f(x_{0})=0$,
the Hessian $\nabla^{2}f(x_{0})$ has simple eigenvalues, with the
smaller eigenvalue satisfying
\begin{align*}
\lambda_{\min}\left(\nabla^{2}f(x_{0})\right) & <0, & e_{\min}\left(\nabla^{2}f(x_{0})\right) & \perp T_{x_{0}}\mathcal{M}.
\end{align*}

\end{enumerate}
Then $\mathcal{M}$ is a ridge for the function $f$ in the sense
of Definition \ref{def:ridge}.\end{prop}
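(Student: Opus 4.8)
The plan is to read \prettyref{def:ridge} literally: I must show that both $\mathcal{M}$ and its boundary $\partial\mathcal{M}$ are invariant and normally attracting for the gradient flow \eqref{eq:gradsystem}, and then certify normal hyperbolicity through the Lyapunov-type number machinery \cite{Fenichel1971}. Invariance is immediate from the hypotheses: condition (i) states $\nabla f(x)\in T_{x}\mathcal{M}$ for every $x\in\mathcal{M}$, so no trajectory of \eqref{eq:gradsystem} started in $\mathcal{M}$ can leave it; condition (ii) makes both endpoints zeros of $\nabla f$, so $\partial\mathcal{M}$ is a flow-invariant pair of points. The substance of the proof is therefore the \emph{normal attraction} of each manifold, i.e. that the transverse contraction rate dominates every tangential rate.

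First I would reduce the verification of normal attraction from arbitrary trajectories to the fixed points alone. Because $\mathcal{M}$ is a compact one-dimensional invariant curve and the restriction of \eqref{eq:gradsystem} to $\mathcal{M}$ is itself a gradient flow (with $\tfrac{d}{dt}f=|\nabla f|^{2}\ge 0$), the restricted flow admits no periodic orbits and no nontrivial recurrence; hence every forward and backward limit set of a trajectory in $\mathcal{M}$ is a single fixed point, which by (ii) is an endpoint or an interior zero of $\nabla f$. I would then invoke the fact that the Lyapunov-type numbers governing normal hyperbolicity along a trajectory coincide with those computed on its limit set \cite{Fenichel1971}. This collapses the problem to checking, at each fixed point $x_{0}\in\mathcal{M}$, that the linearization $\nabla^{2}f(x_{0})$ contracts the normal direction faster than it acts along $T_{x_{0}}\mathcal{M}$.

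The fixed-point computation is then exactly what hypotheses (ii)--(iii) supply. At any zero of $\nabla f$ on $\mathcal{M}$, condition (iii) gives simple Hessian eigenvalues with $e_{\min}\perp T_{x_{0}}\mathcal{M}$; since the eigenvectors of the symmetric matrix $\nabla^{2}f(x_{0})$ are orthogonal, $e_{\max}$ spans $T_{x_{0}}\mathcal{M}$. Thus the normal exponent equals $\lambda_{\min}<0$ and the tangential exponent equals $\lambda_{\max}$, and simplicity forces the strict gap $\lambda_{\min}<\lambda_{\max}$; this is precisely normal attraction of $\mathcal{M}$ at $x_{0}$, whether the point is an interior saddle ($\lambda_{\max}>0$) or, at an endpoint, a sink. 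At the two boundary points condition (ii) adds $\lambda_{\max}<0$, so both Hessian eigenvalues are negative: the endpoint is a sink of \eqref{eq:gradsystem}, attracting in every direction, which is exactly normal attraction for the zero-dimensional manifold $\partial\mathcal{M}$ (there being no tangential rate along $\partial\mathcal{M}$ to compete). Combining the two reductions, all limit sets of \eqref{eq:gradsystem} in $\mathcal{M}$ are normally attracting fixed points, so $\mathcal{M}$ and $\partial\mathcal{M}$ are normally attracting invariant manifolds and $\mathcal{M}$ is a ridge in the sense of \prettyref{def:ridge}.

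The step I expect to be the main obstacle is the reduction itself, namely justifying that the fixed-point eigenvalue inequalities genuinely certify normal hyperbolicity of the \emph{whole} curve. This requires care that the Lyapunov-type numbers are upper-semicontinuous and attain their extremal values on the limit sets, and that condition (iii), applied at the endpoints as well as at the interior zeros, guarantees that $\mathcal{M}$ meets each boundary sink tangent to the weaker-contracting eigendirection $e_{\max}$ rather than along the strong direction $e_{\min}$ --- otherwise the transverse rate would fail to dominate. The orthogonality clause $e_{\min}\perp T_{x_{0}}\mathcal{M}$ in (iii) is exactly what rules this degenerate tangency out, so the remaining work is the routine but careful bookkeeping of contraction rates through Fenichel's theorem.
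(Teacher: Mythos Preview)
Your proposal is correct and follows essentially the same route as the paper: invariance from (i)--(ii), reduction of normal hyperbolicity to the fixed points via the fact that Lyapunov-type numbers of trajectories agree with those on their limit sets \cite{Fenichel1971}, and then the eigenvalue gap from (iii) at each fixed point. You supply more detail than the paper does---in particular, the explicit gradient-flow argument that forces all limit sets in $\mathcal{M}$ to be fixed points, and the observation that $e_{\min}\perp T_{x_{0}}\mathcal{M}$ at the endpoints ensures $\mathcal{M}$ enters each boundary sink along the weak eigendirection---but these elaborations are consistent with, and fill in, the paper's brief argument rather than departing from it.
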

\begin{proof}
See Appendix \ref{sec:appendix_b}.\end{proof}
\begin{rem}
As seen in Fig.\ \ref{fig:manifold} and as required by condition (2)
of Proposition \ref{prop:2Dridge}, a one-dimensional ridge in the
sense of Definition \ref{def:ridge} is necessarily a curve connecting
two local maxima of a scalar field $f(x)$, and containing at least
one more critical point of $\nabla f(x)$ in its interior.
\end{rem}

\begin{rem}
As noted by \cite{Schindler2012}, requiring one of the eigenvectors of the Hessian $\nabla^{2}f(x)$
of a scalar field $f(x)$ to be parallel to a ridge $\mathcal{M}$ at all points $x\in\mathcal{M}$
leads to an over-constrained ridge definition. Our \prettyref{def:ridge} implies that one of the eigenvectors of $\nabla^{2}f(x_{0})$ is automatically parallel to $\mathcal{M}$
at any critical point $x_{0}$ of $f(x)$. This follows from the fact that
$\mathcal{M}$ is an invariant manifold for the gradient flow $\dot{x}=\nabla f(x)$,
and hence $T_{x_{0}}\mathcal{M}$ is necessarily an invariant subspace for the linearized gradient flow $\dot{y}=\nabla^{2}f(x_{0})y$ at any critical point $x_{0}\in\mathcal{M}$
of the function $f(x)$. Condition (iii) of \prettyref{prop:2Dridge} simply adds the requirement that the ridge-parallel eigenvector at $x_{0}$ should be the eigenvector corresponding to the smaller eigenvalue of the Hessian $\nabla^{2}f(x_{0})$.
\end{rem}

\section{When does an FSLE ridge signal a nearby FTLE ridge?}

The following result establishes that an FSLE ridge indicates a nearby
FTLE ridge, provided that the initial separation distance $\delta_{0}$
is small enough, and the ISLE separation times $\tau_{0}\left(x;r\right)$
along the FSLE ridge are close enough to a constant value in the $C^{2}$
norm.
\begin{thm}[Continuation of FSLE ridges into FTLE ridges]
\label{thm:ridge2ridge}  Let $\cM$ be a ridge of the FSLE field
$\sigma\left(x;\delta_{0},r\right)$ in the sense of Definition \ref{def:ridge}.
Assume that in a compact neighborhood $U$ of $\mathcal{M}$, we have
\begin{align}
\partial_{t}\lambda_{\max}\left(C_{t_{0}}^{t_{0}+\tau_{0}(x,r)}(x)\right) &\neq 0, & \left\Vert \tau_{0}\left(x;r\right)-\bar{\tau}_{0}\right\Vert _{C^{2}} &\leq\varepsilon, & x &\in U,\label{eq:C2cond}
\end{align}
 for appropriate constants $\bar{\tau}_{0}>0$ and $0\leq\varepsilon$,
and with $\left\Vert \cdot\right\Vert _{C^{2}}$ referring to the
$C^{2}$ norm. Then, for $\varepsilon,\delta_{0}$ sufficiently small,
the FTLE field $\Lambda_{t_{0}}^{t_{0}+\bar{\tau}_{0}}\left(x\right)$
has a ridge $\mathcal{\bar{M}}$ that is $\mathcal{O}(\varepsilon,\delta_{0})$
~$C^{1}$-close to $\cM$. \end{thm}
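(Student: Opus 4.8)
The plan is to bridge the given FSLE ridge $\mathcal{M}$ and the desired fixed-time FTLE ridge $\bar{\mathcal{M}}$ through the ISLE field, treating the whole statement as a perturbation problem for the associated gradient flows. Concretely, I would interpolate between the three scalar fields $\sigma(\cdot;\delta_0,r)$, $\sigma_0(\cdot,r)$, and $g(x):=\Lambda_{t_0}^{t_0+\bar\tau_0}(x)$, show that consecutive fields are close in a strong enough norm on the compact neighborhood $U$, and then invoke Proposition \ref{prop:ridgepers0} to carry the normally attracting invariant manifold $\mathcal{M}$ of $\dot x=\nabla\sigma$ over to a nearby normally attracting invariant manifold of $\dot x=\nabla g$. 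Since the first condition in \eqref{eq:C2cond} is exactly the non-degeneracy hypothesis of Theorem \ref{thm:well-behavior}, that theorem applies at every point of $U$, guaranteeing that all three fields are $C^2$ there and that the comparisons below are meaningful.

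For the first step I would use Theorem \ref{thm:well-behavior}: on $U$ the FSLE satisfies $\sigma(x;\delta_0,r)=\sigma_0(x,r)+\mathcal{O}(\delta_0)$, which I read as an estimate persisting through the derivatives needed by Proposition \ref{prop:ridgepers0}, so that $\|\sigma-\sigma_0\|$ is $\mathcal{O}(\delta_0)$ in the relevant norm. For the second step I would exploit the identity $\sigma_0(x,r)=\Lambda_{t_0}^{t_0+\tau_0(x,r)}(x)$ from \eqref{eq:sigma0} and compare it with the fixed-time field $g(x)=\Lambda_{t_0}^{t_0+\bar\tau_0}(x)$. Writing $\Lambda_{t_0}^{t}(x)$ as a jointly smooth function of $(t,x)$ on $U$ — legitimate because simplicity of $\lambda_{\max}$ together with $\partial_t\lambda_{\max}\ne 0$ makes $\lambda_{\max}(C_{t_0}^{t}(x))$, and hence $\Lambda_{t_0}^t(x)$, differentiable in $t$ near $t_0+\bar\tau_0$ — a Taylor expansion in the time slot gives for the gradient $\nabla\sigma_0-\nabla g=\big[\nabla_x\Lambda_{t_0}^t(x)|_{t=t_0+\tau_0}-\nabla_x\Lambda_{t_0}^t(x)|_{t=t_0+\bar\tau_0}\big]+\partial_t\Lambda_{t_0}^t(x)|_{t=t_0+\tau_0}\,\nabla_x\tau_0(x,r)$. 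The first bracket is $\mathcal{O}(\tau_0-\bar\tau_0)=\mathcal{O}(\varepsilon)$ by smoothness in $t$, while the second is $\mathcal{O}(\varepsilon)$ because $\|\tau_0-\bar\tau_0\|_{C^2}\le\varepsilon$ forces $\nabla_x\tau_0=\nabla_x(\tau_0-\bar\tau_0)=\mathcal{O}(\varepsilon)$; the analogous second-derivative computation uses the full $C^2$ bound on $\tau_0-\bar\tau_0$. Hence $\|\sigma_0-g\|=\mathcal{O}(\varepsilon)$, and the triangle inequality yields $\|\sigma-g\|=\mathcal{O}(\varepsilon,\delta_0)$.

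With this closeness in hand, I would finish by regarding $g=\Lambda_{t_0}^{t_0+\bar\tau_0}$ as a small perturbation of $\sigma$ and applying Proposition \ref{prop:ridgepers0}: because $\mathcal{M}$ (and $\partial\mathcal{M}$) is a normally attracting invariant manifold for the gradient flow of $\sigma$ in the sense of Definition \ref{def:ridge}, and $\nabla g$ is $C^1$-close to $\nabla\sigma$ for $\varepsilon,\delta_0$ small, the manifold persists to a normally attracting invariant manifold $\bar{\mathcal{M}}$ of $\dot x=\nabla g$ with persisting boundary, i.e.\ a ridge of the fixed-time FTLE field $\Lambda_{t_0}^{t_0+\bar\tau_0}$. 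The persistence estimate, being controlled linearly by the size of the perturbation, delivers precisely the $\mathcal{O}(\varepsilon,\delta_0)$ $C^1$-closeness of $\bar{\mathcal{M}}$ to $\mathcal{M}$ asserted in the theorem.

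I expect the main obstacle to be the second step: certifying that $\sigma_0$ and the fixed-time FTLE $g$ are close in the norm that Proposition \ref{prop:ridgepers0} actually consumes. The difficulty is bookkeeping in the chain rule, since differentiating the composition $x\mapsto\Lambda_{t_0}^{t_0+\tau_0(x,r)}(x)$ twice produces cross terms such as $\partial_t\Lambda\,\nabla^2_x\tau_0$, $\partial_t^2\Lambda\,(\nabla_x\tau_0)^2$, and $\partial_t\nabla_x\Lambda\,\nabla_x\tau_0$, each of which must be shown to be $\mathcal{O}(\varepsilon)$ uniformly on $U$. This requires uniform bounds on the mixed $(t,x)$-derivatives of $\Lambda_{t_0}^t$ — following from the $C^3$ regularity of $v$, compactness of $U$, and the non-degeneracy in \eqref{eq:C2cond} — together with care that the norm in which Theorem \ref{thm:well-behavior} furnishes the $\mathcal{O}(\delta_0)$ estimate matches the $C^1$-on-the-gradient (equivalently $C^2$-on-the-field) control needed to invoke normal-hyperbolicity persistence.
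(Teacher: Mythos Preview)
Your proposal is correct and follows essentially the same route as the paper: use the non-degeneracy condition to invoke Theorem~\ref{thm:well-behavior} so that $\sigma=\sigma_0+\mathcal{O}(\delta_0)$ in $C^2$, then use $\sigma_0(x,r)=\Lambda_{t_0}^{t_0+\tau_0(x,r)}(x)$ together with the $C^2$ smallness of $\tau_0-\bar\tau_0$ to conclude $\sigma=\Lambda_{t_0}^{t_0+\bar\tau_0}+\mathcal{O}_{C^2}(\varepsilon,\delta_0)$, and finally apply Proposition~\ref{prop:ridgepers0}. The paper compresses your Taylor/chain-rule bookkeeping into a single displayed chain of equalities and simply asserts the $C^2$ nature of the error terms, whereas you spell out the cross terms and the regularity needed to bound them; but the logical skeleton is the same.
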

\begin{proof}
See Appendix \ref{sec:appendix_b}.\end{proof}
\begin{rem}

\prettyref{thm:ridge2ridge} implies that there is an open set of
$\bar{\tau}_{0}$ values for which the $\Lambda_{t_{0}}^{t_{0}+\bar{\tau}_{0}}\left(x\right)$
field will admit a nearby ridge. Indeed, small enough changes in the
constant $\bar{\tau}_{0}$ will not affect the statement of the theorem.
\end{rem}

\begin{rem}
By formula \eqref{eq:sigma0}, the second condition in \eqref{eq:C2cond}
is equivalent to
\begin{align*}
\left\Vert \frac{1}{\bar{\sigma}}-\frac{1}{\sigma\left(x_{0},r\right)}\right\Vert _{C^{2}} &\leq\frac{\varepsilon}{\log r}, & \bar{\sigma} & \coloneqq\frac{\log r}{\bar{\tau}_{0}}.
\end{align*}
Therefore, one may equivalently require small enough variations in
the reciprocal of ISLE field $\sigma\left(x_{0},r\right)$ in the
$C^{2}$ norm within a compact neighborhood $U$ of the ridge $\mathcal{M}$.
This in turn can be enforced by requiring small enough variations
in the FSLE field along $\mathcal{M}$ by formula \eqref{eq:FSLE-ISLE}.
\end{rem}

\begin{rem}
By Fenichel's results \cite{Fenichel1971}, the FTLE ridge is only
guaranteed to be $\mathcal{O}(\varepsilon,\delta_{0})$ $C^{1}$-close
to the original FSLE ridge. This means that the two ridges are pointwise
close and their tangent spaces at these points are also close. Closeness
of the curvatures of the two ridges, however, does not immediately
follow in our setting.
\end{rem}

\begin{rem}\label{rmk:peaks}
Assume that the FSLE field is of class $C^{s}$ with $s\geq 2$, and its ridge is a $C^{p}$
differentiable manifold with $p\geq 2$. Then the maximum degree of smoothness guaranteed for a nearby FTLE ridge will be
\[
q=\min\left(s-1,p,\min_{x_{0}\in Z_{0}}\mathrm{Int}\left[\frac{\lambda_{\mathrm{min}}\left(\nabla^{2}f(x_{0})\right)}{\lambda_{\max}\left(\nabla^{2}f(x_{0})\right)}\right]\right)
\]
by the theory of normally hyperbolic invariant manifolds \cite{Fenichel1971}. Here we have used the set
\[
Z_{0}=\left\{ x_{0}\in\mathcal{M}:\,\,\nabla f(x_{0})=0,\,\,\lambda_{\mathrm{min}}\left(\nabla^{2}f(x_{0})\right)<\lambda_{\max}\left(\nabla^{2}f(x_{0})\right)<0\right\},
\]
as well as the notation $\mathrm{Int\left[\cdot\right]}$ for the integer part of a positive real number. The quotient $\lambda_{\mathrm{min}}\left(\nabla^{2}f(x_{0})\right)/\lambda_{\max}\left(\nabla^{2}f(x_{0})\right)$ is just the Lyapunov-type number introduced by \cite{Fenichel1971}, computed at stationary values $x_{0}$ of $f$ along $\mathcal{M}$. The minimum of these Lyapunov-type numbers potentially limits the differentiability of the nearby FTLE ridge further, as seen from the formula defining $q$. In general, the larger the minimal Lyapunov-type number along the FSLE ridge, the more robust the ridge is under perturbations, i.e., the larger $\delta_{0}$ and $\epsilon$ can be selected in the statement of \prettyref{thm:ridge2ridge}.
\end{rem}

\begin{example}
\label{ex:ridge_nocont} The moving separation example in
\prettyref{sec:movesep} shows that large variations in the height
of FSLE ridges do indeed result in the non-persistence of these ridges
in the FTLE field. In this example, a large variation in $\tau_{0}$
is observed along the ISLE ridge defined by $x_{2}=0$, see Fig.\ \ref{fig:snow-1}.
As a result, no constant $\bar{\tau}_{0}$ satisfying \eqref{eq:C2cond}
can be selected for small values of $\varepsilon>0$.
\end{example}

\begin{figure}
\centering
\includegraphics[width=0.5\textwidth]{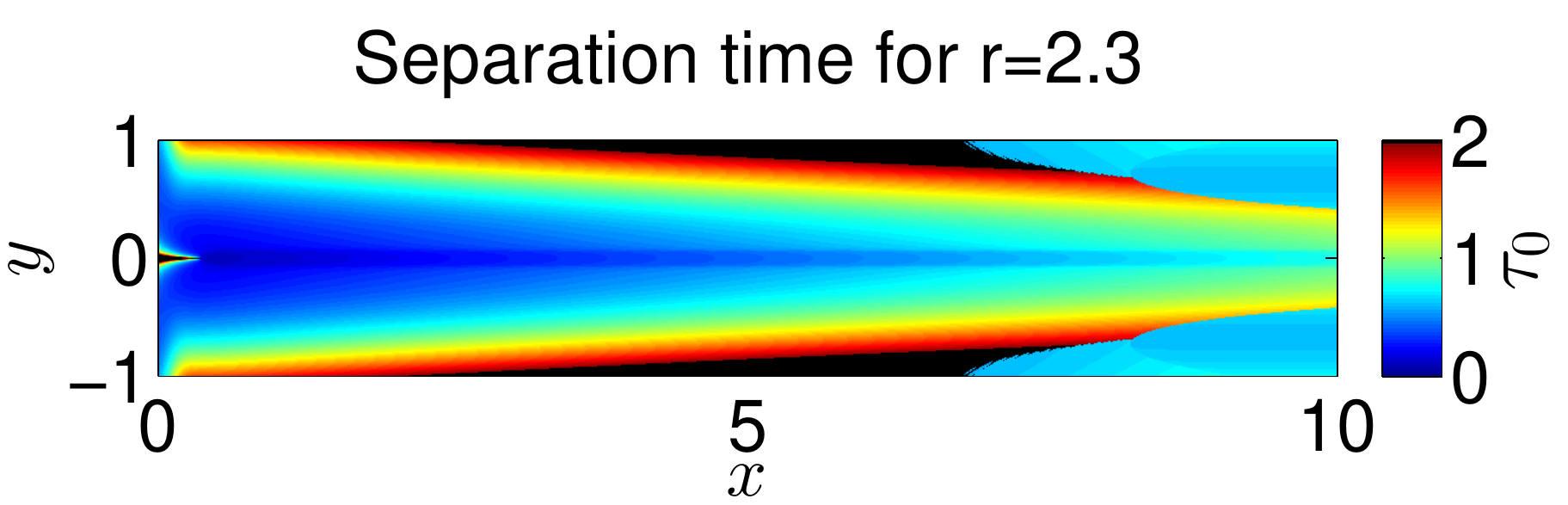}
\caption{The separation time distribution for the moving separation flow \prettyref{eq:separationexample}
with separation factor $r=2.3$.}
\label{fig:snow-1}
\end{figure}

\prettyref{thm:ridge2ridge} and Remark \ref{rmk:peaks} show that
FSLE ridges with small enough variations in their ISLE values in the
$C^{2}$ norm, and with large enough transverse steepness at their
peaks, give rise to nearby FTLE ridges. Example \ref{ex:ridge_nocont}
shows that in flows violating this requirement, either no or several
$C^{1}$-close FTLE ridges may exist. Therefore, the types of conditions
required in \prettyref{thm:ridge2ridge} are indeed necessary for FSLE
ridges to be meaningful in hyperbolic LCS detection, even though the
constants arising in these conditions are not readily computable from our proof.

\section{Inferring hyperbolic LCS from FSLE ridges\label{sec:route}}

While select FSLE ridges signal the presence of nearby FTLE ridges
by Theorem \ref{thm:ridge2ridge}, this does not imply that there
is always a corresponding hyperbolic LCS in the flow. Indeed, simple
examples show that an FTLE ridge may simply indicate locations of
locally maximal shear \cite{Haller2002,Haller2011}.

More recent variational methods enable the direct extraction of hyperbolic
LCSs as pa\-ram\-e\-trized curves \cite{Farazmand2012-1}. Further
generalizations extend this computational advantage to parabolic and
elliptic LCSs as well \cite{Haller2012,Haller2013a}.

These high-end detection techniques also require additional computational
investment that ensures the accurate solution of differential equations
derived from the eigenvector fields of the Cauchy--Green strain tensor.
For a rough first assessment of hyperbolic LCSs, one may simply check
additional criteria along FTLE ridges to conclude the existence of
nearby hyperbolic LCSs. We refer the reader to \cite{Haller2011,Haller2000-1,Farazmand2012,Karrasch2012}
for such criteria.

Conversely, given a spatial scale of interest, a preliminary FSLE
analysis might be helpful in determining a relevant time scale of
integration to be used in variational LCS methods.

\section{Conclusions\label{sec:conclusions}}

Using the Infinitesimal-Size Lyapunov Exponent (ISLE), we have established
a link between certain ridges of the FSLE field and those of the FTLE
field. Specifically, FSLE ridges with moderate ISLE variations and
high normal steepness at their peaks signal nearby FTLE ridges, as
long as the time-derivative of the largest eigenvalue of the Cauchy-Green
strain tensor is nonzero in a neighborhood of these FSLE ridges (cf.
Theorem \ref{thm:ridge2ridge} and Remark \ref{rmk:peaks}). This
nonzero derivative condition will, however, be violated along families
of hypersurfaces in the phase space, over which the FSLE field admits
jump-discontinuities.

Families of such FSLE jump-surfaces are generically present in any
nonlinear flow (Proposition \ref{prop:degeneracy}), creating sensitivity
in FSLE computations with respect to the temporal resolution of the
underlying flow data (cf.\ Remark \ref{rmk:sensitivity}). This sensitivity
may even impact the accuracy of FSLE statistics, as has already been
observed for float experiments in the ocean \cite{LaCasce2008}.

In addition to jump-discontinuities and the associated temporal sensitivity,
we have also identified further disadvantages of the FSLE field in
detecting Lagrangian coherence. These include ill-posedness for ranges
of the separation parameter $r$, insensitivity to changes in the
flow once the separation time $\tau$ is reached, and non-existence
of nearby FTLE ridges in the case of FSLE ridges with substantial
variation in their heights. We have illustrated all these issues with
the FSLE in simple examples.

These findings suggest that the simplicity of computing FSLE comes
at a price. If the objective is the accurate and threshold-free detection
of hyperbolic LCSs, then more recent variational LCS techniques offer
multiple advantages over FSLE-based coherence detection. While these
variational techniques require a higher computational investment,
they do provide a full and rigorous detection of all types of LCSs,
including hyperbolic, parabolic, and elliptic LCSs \cite{Haller2012,Beron-Vera2013,Haller2013a}.
This is to be contrasted with the substantial cost of varying two
free parameters and with the remaining uncertainty in the results,
if LCSs are to be inferred from the FSLE field without further mathematical
analysis. On the upside, given a spatial scale of interest, FSLE may
help in identifying the integration times to be used in variational
LCS methods.

In summary, the use of the FSLE in hyperbolic LCS detection requires
caution. Only flows with high temporal resolution and limited unsteadiness
can be reliably analyzed. In addition, only ridges with moderate variations
in their height and with high enough normal steepness at their peaks
can be guaranteed to signal nearby LCSs. Even in such flows, the FSLE
field will show sensitivity near hypersurfaces defined by the equation
$\partial_{t}\lambda_{\max}\left(C_{t_{0}}^{t_{0}+\tau_{0}(x,r)}(x)\right)=0.$
This sensitivity is the highest near hyperbolic LCSs, as these lead
to low values of the separation time, whose reciprocal values magnify
errors in the FSLE field.

\subsection*{Acknowledgements}

We thank Ronny Peikert for suggesting references on available persistence
results for ridges. We also thank Joe LaCasce for stimulating discussions
on Lagrangian statistics and their relation to the FSLE. We are grateful
to Dan Blazevski and Mohammad Farazmand for their insights into the
definition of ridges.

\appendix

\section{Proof of Theorem \ref{thm:well-behavior}, Remark \ref{rmk:strainformula}
and Proposition \ref{prop:degeneracy}}\label{sec:appendix_a}

\subsection{Proof of Theorem \ref{thm:well-behavior}}

The separation time $\tau(x_{0};\delta_{0},r)$ at the initial condition
$x_{0}$ is the smallest positive solution of the equation
\begin{equation}
\begin{split}r^{2}\delta_{0}^{2} & =\left\vert F_{t_{0}}^{t_{0}+\tau}(y_{0})-F_{t_{0}}^{t_{0}+\tau}(x_{0})\right\vert ^{2}\\
 & =\left\vert DF_{t_{0}}^{t_{0}+\tau}(x_{0})\left(y_{0}-x_{0}\right)+\mathcal{O}\left(\left\vert y_{0}-x_{0}\right\vert ^{2}\right)\right\vert ^{2}\\
 & =\delta_{0}^{2}\left\langle e\left(x_{0}\right),\left[DF_{t_{0}}^{t_{0}+\tau}(x_{0})\right]^{T}DF_{t_{0}}^{t_{0}+\tau}(x_{0})e\left(x_{0}\right)\right\rangle +\mathcal{O}\left(\delta_{0}^{3}\right)\\
 & =\delta_{0}^{2}\left\langle e\left(x_{0}\right),C_{t_{0}}^{t_{0}+\tau}(x_{0})e\left(x_{0}\right)\right\rangle +\mathcal{O}\left(\delta_{0}^{3}\right),
\end{split}
\label{eq:disteq1}
\end{equation}
where
\[
e\left(x_{0}\right)=\frac{y_{0}-x_{0}}{\left\vert y_{0}-x_{0}\right\vert },
\]
is the unit vector pointing from $x_{0}$ towards $y_{0}$. Dividing
\eqref{eq:disteq1} by $\delta_{0}^{2}$, we obtain
\begin{equation}
\left\langle e\left(x_{0}\right),C_{t_{0}}^{t_{0}+\tau}(x_{0})e\left(x_{0}\right)\right\rangle +\mathcal{O}\left(\delta_{0}\right)=r^{2},\label{eq:disteq2}
\end{equation}
which is equivalent to \eqref{eq:disteq1} for all $\delta_{0}>0$.

By continuity of all quantities involved in \eqref{eq:disteq2}, the
limit $\tau(x_{0},r)=\lim_{\delta_{0}\to0}\tau(x_{0};\delta_{0},r)$
must coincide with the minimal solution $\tau$ of the equation
\begin{equation}
\left\langle e\left(x_{0}\right),C_{t_{0}}^{t_{0}+\tau}(x_{0})e\left(x_{0}\right)\right\rangle =r^{2}.\label{eq:disteq3}
\end{equation}

To explore the solvability of the limiting equation \eqref{eq:disteq3},
recall that the Cauchy--Green strain tensor $C_{t_{0}}^{t_{0}+\tau}(x_{0})$
is symmetric, positive definite, and satisfies $C_{t_{0}}^{t_{0}}(x_{0})=I$,
with $I$ denoting the identity matrix. Consequently, $\tau_{0}(x_{0},r)\coloneqq\tau(x_{0};0,r)$
is the smallest positive solution of \eqref{eq:disteq3} if $e\left(x_{0}\right)$
is chosen as the unit dominant eigenvector $e_{\max}\left(C_{t_{0}}^{t_{0}+\tau_{0}(x_{0},r)}(x_{0})\right)$
of the associated Cauchy--Green strain tensor. In that case, an equivalent
equation for the smallest positive root of \eqref{eq:disteq3} is
given by
\begin{equation}
\lambda_{\max}\left(C_{t_{0}}^{t_{0}+\tau_{0}(x_{0},r)}(x_{0})\right)=r^{2},\label{eq:disteq4}
\end{equation}
which implies the formula \eqref{eq:sigma0}, and hence proves statement
(i) of Theorem \ref{thm:well-behavior} with the exception of the
claim of $C^{2}$ smoothness.

To prove statement (ii) of Theorem \ref{thm:well-behavior} and the
$C^{2}$ smoothness in statement (i), we want to continue the solution
of equation \eqref{eq:disteq2} smoothly from $\delta_{0}=0$ to $\delta_{0}>0$
values. By the implicit function theorem, this continuation requires
precisely condition \eqref{eq:nondeg} to hold. Also, the continued
solution will be $C^{2}$ smooth by the implicit function theorem,
given that the right-hand side of \eqref{eq:velo}, and hence the
flow map, are assumed to be $C^{3}$ smooth. Consequently, statement
(ii) of Theorem \ref{thm:well-behavior} follows.

\subsection{Proof of Remark \ref{rmk:strainformula}}

Observe that the derivative in the non-degeneracy condition \eqref{eq:nondeg}
can be computed as
\begin{align}
\partial_{\tau}\lambda_{\max}\left(C_{t_{0}}^{t_{0}+\tau}(x_{0})\right) & =\partial_{\tau}\left\langle e_{\mathrm{max}}\left(C_{t_{0}}^{t_{0}+\tau}(x_{0})\right),C_{t_{0}}^{t_{0}+\tau}(x_{0})e_{\mathrm{max}}\left(C_{t_{0}}^{t_{0}+\tau}(x_{0})\right)\right\rangle \nonumber \\
 & =\left\langle e_{\mathrm{max}}\left(C_{t_{0}}^{t_{0}+\tau}(x_{0})\right),\partial_{\tau}C_{t_{0}}^{t_{0}+\tau}(x_{0})e_{\mathrm{max}}\left(C_{t_{0}}^{t_{0}+\tau}(x_{0})\right)\right\rangle ,\label{eq:cgder0}
\end{align}
where we have used the fact that
\[
\partial_{\tau}e_{\mathrm{max}}\left(C_{t_{0}}^{t_{0}+\tau}(x_{0})\right)\perp e_{\mathrm{max}}\left(C_{t_{0}}^{t_{0}+\tau}(x_{0})\right),
\]
 given that $\left|e_{\mathrm{max}}\left(C_{t_{0}}^{t_{0}+\tau}(x_{0})\right)\right|\equiv1.$
Furthermore, we have
\begin{multline}
\partial_{\tau}C_{t_{0}}^{t_{0}+\tau}(x_{0})\bigr|{}_{\tau_{0}(x_{0},r)}=\left[\partial_{\tau}\left(DF_{t_{0}}^{t_{0}+\tau}(x_{0})\right)^{T}DF_{t_{0}}^{t_{0}+\tau}(x_{0})+\right.\\
\left.\left.+\left(DF_{t_{0}}^{t_{0}+\tau}(x_{0})\right)^{T}\partial_{\tau}\left(DF_{t_{0}}^{t_{0}+\tau}(x_{0})\right)\right]\right|{}_{\tau_{0}(x_{0},r)}.\label{eq:cgder}
\end{multline}
 We recall that the deformation gradient $DF_{t_{0}}^{t_{0}+\tau}(x_{0})$
satisfies the equations of variation
\begin{equation}
\partial_{\tau}\left(DF_{t_{0}}^{t_{0}+\tau}(x_{0})\right)=\partial_{x}v\left(F_{t_{0}}^{t_{0}+\tau}(x_{0}),\tau\right)DF_{t_{0}}^{t_{0}+\tau}(x_{0}).\label{eq:vari}
\end{equation}
Substituting expression \eqref{eq:vari} into \eqref{eq:cgder}, then
the resulting equation into \eqref{eq:cgder0} shows that conditions
\eqref{eq:nondeg} and \eqref{eq:nondeg2} are indeed equivalent,
as claimed in Remark \ref{rmk:strainformula}.

\subsection{Proof of Proposition \ref{prop:degeneracy}}

First, note that points violating the conditions for the well-posedness
of the FSLE satisfy the two scalar equations
\begin{eqnarray}
\lambda_{\max}\left(C{}_{t_{0}}^{t_{0}+\tau_{0}}(x_{0})\right)-r^{2} & = & 0,\label{eq:deg1}\\
\partial_{t}\lambda_{\max}\left(C_{t_{0}}^{t_{0}+\tau_{0}}(x_{0})\right) & = & 0.\label{eq:deg2}
\end{eqnarray}
 Assume that an isolated solution $(\bar{\tau}_{0},\bar{x}_{0})$
exists to this system of equations, such that $\bar{\tau}_{0}>0$
is also the minimal solution of \eqref{eq:deg1} for $\bar{x}_{0}$.
Then, by definition, $\bar{\tau}_{0}(\bar{x}_{0})$ is the separation
time for the initial condition $\bar{x}_{0}$ with separation factor
$r$. Furthermore, this separation time violates the non-degeneracy
condition \eqref{eq:nondeg}. Since being a minimal solution is an
open property and \eqref{eq:deg1} is continuous in its arguments, any
solutions $(\tau_{0},x_{0})$ of system \eqref{eq:deg1}-\eqref{eq:deg2}
close enough to $(\bar{\tau}_{0},\bar{x}_{0})$ will also define a
separation time and its corresponding location.

We would like to argue that a set of nearby solutions to equation
\eqref{eq:nondeg} generically exists and forms a smooth, $(n-1)$-dimensional
surface in the space of the $(\tau_{0},x_{0})$ variables. To this
end, we let $x_{0}^{1}$ denote the first coordinate of $x_{0}$ and
let $x_{0}^{(n-1)}$ denote the remaining $(n-1)$ coordinates of
$x_{0}$, so that $x_{0}=(x_{0}^{1},x_{0}^{(n-1)})$. We seek to establish
that near the solution $(\bar{\tau}_{0},\bar{x}_{0}^{1},\bar{x}_{0}^{(n-1)})$,
the system of equation \eqref{eq:deg1}-\eqref{eq:deg2} will continue
to admit a smooth solution of the form
\[
(\tau_{0},x_{0}^{1})=(\bar{\tau}_{0},\bar{x}_{0}^{1})+h\left(x_{0}^{(n-1)}-\bar{x}_{0}^{(n-1)}\right),
\]
where $h\colon U\subset\mathbb{R}^{n-1}\to\mathbb{R}^2$ is a smooth function with
$h(0)=0$, defined in a neighborhood $U$ of the origin $0\in\mathbb{R}^{n-1}$.
This follows from a direct application of the implicit function theorem
to the equations \eqref{eq:deg1}-\eqref{eq:deg2}, provided that
\begin{equation}\left.
\det\begin{pmatrix}\partial_{t}\lambda_{\max}\left(C{}_{t_{0}}^{t}(x_{0})\right) & \partial_{x_{0}^{1}}\lambda_{\max}\left(C{}_{t_{0}}^{t}(x_{0})\right)\\
\partial_{t}^{2}\lambda_{\max}\left(C_{t_{0}}^{t}(x_{0})\right) & \partial_{x_{0}^{1}}\partial_{t}\lambda_{\max}\left(C_{t_{0}}^{t}(x_{0})\right)
\end{pmatrix}\right|_{t=t_{0}+\bar{\tau}_{0},x_{0}=\bar{x}_{0}}\neq0.\label{eq:nondegmat}
\end{equation}

By equation \eqref{eq:deg2}, the first diagonal entry of the matrix
in \eqref{eq:nondegmat} is zero, and hence \eqref{eq:nondegmat}
is equivalent to
\[
\partial_{t}^{2}\lambda_{\max}\left(C_{t_{0}}^{t}(x_{0})\right)\partial_{x_{0}^{1}}\lambda_{\max}\left(C{}_{t_{0}}^{t}(x_{0})\right)\neq0,\qquad t=t_{0}+\bar{\tau}_{0},\quad x_{0}=\bar{x}_{0}.
\]
This latter condition is satisfied as long as (1) $\lambda_{\max}\left(C_{t_{0}}^{t}(\bar{x}_{0})\right)$
has a non-degenerate temporal maximum at the degenerate separation
time $\bar{\tau}_{0}(\bar{x}_{0})$ separation,
and (2) the maximal eigenvalue $\lambda_{\max}\left(C{}_{t_{0}}^{t}(x_{0})\right)$
varies strictly in the $x_{0}^{1}$ direction. Condition (1) holds
by the first inequality in \eqref{eq:degsurf}. Condition (2) can
always be satisfied by a possible reordering of the coordinates of
the vector $x_{0}$, given that the second inequality in \eqref{eq:degsurf}
is assumed to hold.

\section{Proof of Proposition \ref{prop:ridgepers0}, Proposition
\ref{prop:2Dridge}, and Theorem \ref{thm:ridge2ridge}}\label{sec:appendix_b}

\subsection{Proof of Proposition \ref{prop:ridgepers0}}

Elements of this flow geometry sketched in Fig.\ \ref{fig:manifold}
were studied by Fenichel \cite{Fenichel1971}, who established general
persistence results for compact, normally hyperbolic invariant manifolds
under small perturbations. These results imply that $\mathcal{\partial M}$
smoothly and uniquely persists in the form of a nearby attracting,
invariant manifold $\overline{\partial\mathcal{M}}$ under small perturbations.
Furthermore, $\mathcal{M}$ can be slightly enlarged into a normally
attracting, inflowing invariant manifold $\mathcal{N}$ beyond its
boundary. (An inflowing invariant manifold is a manifold tangent to
the underlying vector field, such that the vector field points strictly
inwards along the boundary of the manifold.) By Fenichel \cite{Fenichel1971},
such a manifold $\mathcal{N}$ also persists smoothly (but typically
not uniquely) as an attracting, inflowing invariant manifold $\overline{\mathcal{N}}$.

Now $\overline{\partial\mathcal{M}}$ necessarily lies in the domain
of attraction of $\overline{\mathcal{N}}$, which is only possible
if $\overline{\partial\mathcal{M}}\subset\overline{\mathcal{N}}$.
Then the closure of the interior of $\overline{\partial\mathcal{M}}$
within $\overline{\mathcal{N}}$, which we denote by $\overline{\mathcal{M}}$,
is a codimension-one, normally attracting invariant manifold such
that its boundary satisfies $\partial\overline{\mathcal{M}} =\overline{\partial\mathcal{M}}$. Consequently, the original manifold $\mathcal{M}$
has smoothly perturbed into $\overline{\mathcal{M}}$ under small
enough perturbations, as claimed.

\subsection{Proof of Proposition \ref{prop:2Dridge}}

Condition (1) of Proposition \ref{prop:2Dridge} ensures that the
codimension-one manifold $\mathcal{M}$ is invariant under the flow
of \eqref{eq:gradsystem}. Condition (2) ensures that the boundary
points (which are necessarily fixed points by the invariance of $\mathcal{M}$)
are attracting along $\mathcal{M}$. Condition (3) ensures that at
the fixed points of the gradient flow \eqref{eq:gradsystem} contained
in $\mathcal{M}$, the contraction rates normal to $\mathcal{M}$
dominate any possible contraction rate inside $\mathcal{M}$. Since
the asymptotic normal attracting properties of trajectories coincide
with those of their limit sets \cite{Fenichel1971}, normal attraction
for the whole of $\mathcal{M}\subset\R^{2}$ follows from the fact
that normal attraction holds at all fixed points of \eqref{eq:gradsystem}
inside $\mathcal{M}$.

\subsection{Proof of Theorem \ref{thm:ridge2ridge}}

The first condition in \eqref{eq:C2cond} ensures that both the FSLE
and ISLE fields remain well-defined and smooth in the whole compact
neighborhood $U$. Then, by the second condition in \eqref{eq:C2cond},
we can write
\begin{align*}
\sigma\left(x;\delta_{0},r\right) & =\sigma_{0}\left(x;r\right)+\mathcal{O}_{2}(\delta_{0})=\Lambda_{t_{0}}^{t_{0}+\tau_{0}(x_{0},r)}\left(x\right)+\mathcal{O}_{2}(\delta_{0})=\Lambda_{t_{0}}^{t_{0}+\bar{\tau}_{0}+\mathcal{O}(\varepsilon)}\left(x\right)+\mathcal{O}_{2}(\delta_{0})\\
 & =\Lambda_{t_{0}}^{t_{0}+\bar{\tau}_{0}}\left(x\right)+\mathcal{O}_{2}(\varepsilon,\delta_{0}),
\end{align*}
 where the $\mathcal{O}_{2}(\delta_{0})$ and $\mathcal{O}_{2}(\varepsilon,\delta_{0})$
terms denote a small, $C^{2}$ perturbation to the function $\Lambda_{t_{0}}^{t_{0}+\bar{\tau}_{0}}\left(x\right)$.
As a result, we have
\begin{equation}
\dot{x}=\nabla\Lambda_{t_{0}}^{t_{0}+\bar{\tau}_{0}}\left(x\right)=\partial_{x}\sigma\left(x;\delta_{0},r\right)+\mathcal{O}_{1}(\varepsilon,\delta_{0}),\label{eq:persist}
\end{equation}
in the compact neighborhood $U$ of $\mathcal{M}$, with $\mathcal{O}_{1}(\varepsilon,\delta_{0})$
denoting terms that are $\mathcal{O}(\varepsilon,\delta_{0})$ $C^{1}$-small.

Then, by Proposition \ref{prop:ridgepers0}, the dynamical system
\eqref{eq:persist} admits a ridge $\widetilde{\mathcal{M}}$ in the sense of Definition \ref{def:ridge},
which is $\mathcal{O}(\varepsilon,\delta_{0})$
$C^{1}$-close to $\mathcal{M}$, as claimed.

\bibliographystyle{plain}
%\bibliography{danibibo}

\begin{thebibliography}{10}

\bibitem{Artale1997}
{V}. {A}rtale, {G}. {B}offetta, {A}. {C}elani, {M}. {C}encini, and {A}.
  {V}ulpiani.
\newblock {D}ispersion of passive tracers in closed basins: {B}eyond the
  diffusion coefficient.
\newblock {\em Phys. Fluids}, 9(11):3162--3171, 1997.

\bibitem{Aurell1997}
{E} {A}urell, {G} {B}offetta, {A} {C}risanti, {G} {P}aladin, and {A}
  {V}ulpiani.
\newblock {P}redictability in the large: an extension of the concept of
  {L}yapunov exponent.
\newblock {\em Journal of Physics A}, 30(1):1, 1997.

\bibitem{Beron-Vera2013}
{F}.~{J}. {B}eron {V}era, {Y}. {W}ang, {M}.~{J}. {O}lascoaga, {G}.~{J}. {G}oni,
  and {G}. {H}aller.
\newblock {O}bjective detection of oceanic eddies and the {A}gulhas leakage.
\newblock {\em Journal of Physical Oceanography}, 43(7):1426--1438, 2013.

\bibitem{Bettencourt2013}
{J}.~{H}. {B}ettencourt, {C}. {L}{\'o}pez, and {E}. {H}ern{\'a}ndez
  {G}arc{\'i}a.
\newblock {C}haracterization of coherent structures in three-dimensional
  turbulent flows using the finite-size {L}yapunov exponent.
\newblock {\em Journal of Physics A: Mathematical and Theoretical},
  46(25):254022, 2013.

\bibitem{Cencini2013}
{M}. {C}encini and {A}. {V}ulpiani.
\newblock {F}inite size {L}yapunov exponent: review on applications.
\newblock {\em Journal of Physics A: Mathematical and Theoretical},
  46(25):254019, 2013.

\bibitem{Damon1999}
{J}. {D}amon.
\newblock {P}roperties of {R}idges and {C}ores for {T}wo-{D}imensional
  {I}mages.
\newblock {\em J. Math. Imaging Vision}, 10(2):163--174, 1999.

\bibitem{dOvidio2004}
{F}. d'{O}vidio, {V}. {F}ern{\'{a}}ndez, {E}. {H}ern{\'{a}}ndez {G}arc{\'{i}}a,
  and {C}. {L}{\'{o}}pez.
\newblock {M}ixing structures in the {M}editerranean {S}ea from finite-size
  {L}yapunov exponents.
\newblock {\em Geophys. Res. Lett.}, 31(17):L17203, 2004.

\bibitem{Eberly1994}
{D}. {E}berly, {R}. {G}ardner, {B}. {M}orse, {S}. {P}izer, and {C}.
  {S}charlach.
\newblock {R}idges for image analysis.
\newblock {\em J. Math. Imaging Vis.}, 4:353--373, 1994.

\bibitem{Farazmand2012-1}
{M}. {F}arazmand and {G}. {H}aller.
\newblock {C}omputing {L}agrangian coherent structures from their variational
  theory.
\newblock {\em Chaos}, 22(1):013128, 2012.

\bibitem{Farazmand2012}
{M}. {F}arazmand and {G}. {H}aller.
\newblock {E}rratum and addendum to ``{A} variational theory of hyperbolic
  {L}agrangian coherent structures [{P}hysica {D} 240 (2011) 574--598]''.
\newblock {\em Physica D}, 241(4):439--441, 2012.

\bibitem{Farazmand2013}
{M}. {F}arazmand and {G}. {H}aller.
\newblock {A}ttracting and repelling {L}agrangian coherent structures from a
  single computation.
\newblock {\em Chaos}, 23(2):023101, 2013.

\bibitem{Fenichel1971}
{N}. {F}enichel.
\newblock {P}ersistence and {S}moothness of {I}nvariant {M}anifolds for
  {F}lows.
\newblock {\em Indiana Univ. Math. J.}, 21(3):193--226, 1971.

\bibitem{Haller2000-1}
{G}. {H}aller.
\newblock {F}inding finite-time invariant manifolds in two-dimensional velocity
  fields.
\newblock {\em Chaos}, 10(1):99--108, 2000.

\bibitem{Haller2001}
{G}. {H}aller.
\newblock {D}istinguished material surfaces and coherent structures in
  three-dimensional fluid flows.
\newblock {\em Physica D}, 149(4):248--277, 2001.

\bibitem{Haller2002}
{G}. {H}aller.
\newblock {L}agrangian coherent structures from approximate velocity data.
\newblock {\em Physics of Fluids}, 14(6):1851--1861, 2002.

\bibitem{Haller2011}
{G}. {H}aller.
\newblock {A} variational theory of hyperbolic {L}agrangian {C}oherent
  {S}tructures.
\newblock {\em Physica D}, 240(7):574--598, 2011.

\bibitem{Haller2012}
{G}. {H}aller and {F}.~{J}. {B}eron {V}era.
\newblock {G}eodesic theory of transport barriers in two-dimensional flows.
\newblock {\em Physica D}, 241(20):1680--1702, 2012.

\bibitem{Haller2013a}
{G}. {H}aller and {F}.~{J}. {B}eron {V}era.
\newblock {C}oherent {L}agrangian vortices: {T}he black holes of turbulence.
\newblock {\em Journal of Fluid Mechanics}, 731(R4), 2013.

\bibitem{Joseph2002}
{B}. {J}oseph and {B}. {L}egras.
\newblock {R}elation between {K}inematic {B}oundaries, {S}tirring, and
  {B}arriers for the {A}ntarctic {P}olar {V}ortex.
\newblock {\em J. Atmos. Sci.}, 59(7):1198--1212, 2002.

\bibitem{Karrasch2012}
{D}. {K}arrasch.
\newblock {C}omment on ``{A} variational theory of hyperbolic {L}agrangian
  {C}oherent {S}tructures, {P}hysica {D} 240 (2011) 574--598''.
\newblock {\em Physica D}, 241(17):1470--1473, 2012.

\bibitem{LaCasce2008}
{J}.{H}. {L}a{C}asce.
\newblock {S}tatistics from {L}agrangian observations.
\newblock {\em Progress in Oceanography}, 77(1):1 -- 29, 2008.

\bibitem{Lai2011}
{Y}.-{C}. {L}ai and {T}. {T}{\'e}l.
\newblock {\em {T}ransient {C}haos - {C}omplex {D}ynamics on {F}inite {T}ime
  {S}cales}, volume 173 of {\em Applied Mathematical Sciences}.
\newblock Springer, 2011.

\bibitem{Lehahn2007}
{Y}. {L}ehahn, {F}. d'{O}vidio, {M}. {L}{\'{e}}vy, and {E}. {H}eifetz.
\newblock {S}tirring of the northeast {A}tlantic spring bloom: {A} {L}agrangian
  analysis based on multisatellite data.
\newblock {\em J. Geophys. Res.}, 112:C08005, 2007.

\bibitem{Mathur2007}
{M}. {M}athur, {G}. {H}aller, {T}. {P}eacock, {J}.~{E}. {R}uppert {F}elsot, and
  {H}.~{L}. {S}winney.
\newblock {U}ncovering the {L}agrangian {S}keleton of {T}urbulence.
\newblock {\em Phys. Rev. Lett.}, 98(14):144502, 2007.

\bibitem{Norgard2013}
{G}. {N}orgard and {P}.-{T}. {B}remer.
\newblock {R}idge-{V}alley graphs: {C}ombinatorial ridge detection using
  {J}acobi sets.
\newblock {\em Comput. Aided Geom. Design}, 30(6):597 -- 608, 2013.

\bibitem{Peacock2013}
{T}. {P}eacock and {G}. {H}aller.
\newblock {L}agrangian coherent structures: {T}he hidden skeleton of fluid
  flows.
\newblock {\em Physics Today}, 66(2):41--47, 2013.

\bibitem{Peikert2014}
{R}. {P}eikert, {A}. {P}obitzer, {F}. {S}adlo, and {B}. {S}chindler.
\newblock {A} {C}omparison of {F}inite-{T}ime and {F}inite-{S}ize {L}yapunov
  {E}xponents.
\newblock In P-T. Bremer, I.~Hotz, V.~Pascucci, and R.~Peikert, editors, {\em
  Topological Methods in Data Analysis and Visualization III}. Springer, 2014.
\newblock to appear.

\bibitem{Schindler2012}
{B}. {S}chindler, {R}. {P}eikert, {R}. {F}uchs, and {H}. {T}heisel.
\newblock {R}idge {C}oncepts for the {V}isualization of {L}agrangian {C}oherent
  {S}tructures.
\newblock In R.~Peikert, H.~Hauser, H.~Carr, and R.~Fuchs, editors, {\em
  Topological Methods in Data Analysis and Visualization II}, pages 221--236.
  Springer, 2012.

\bibitem{Shadden2005}
{S}.~{C}. {S}hadden, {F}. {L}ekien, and {J}.~{E}. {M}arsden.
\newblock {D}efinition and properties of {L}agrangian coherent structures from
  finite-time {L}yapunov exponents in two-dimensional aperiodic flows.
\newblock {\em Physica D}, 212(3-4):271--304, 2005.

\bibitem{Shadden2011}
{S}hawn~{C}. {S}hadden.
\newblock {L}agrangian {C}oherent {S}tructures.
\newblock In R.~Grigoriev, editor, {\em Transport and Mixing in Laminar Flows},
  pages 59--89. Wiley, 2011.

\end{thebibliography}

\end{document}